\documentclass[11pt,reqno]{article}
\usepackage{latexsym}
\usepackage{amssymb,amsmath}
\usepackage{graphicx}
\input epsf
\setlength{\parskip}{5pt plus 2pt}
%%%%%%%%%%%%%%%%%%%%%%%%%%%%%%%%%%%%%%%%%%%%%%%%%%%%%%%%%%%%
\parindent=0pt %%%%%%%% This deflates (sub)section titles %%%%%%%%%%%%%%
\makeatletter
\def\section{\@startsection {section}{1}{\z@}{-3.5ex plus -1ex minus
 -.2ex}{2.3ex plus .2ex}{\large\sc}}
\def\subsection{\@startsection{subsection}{2}{\z@}{-3.25ex plus -1ex minus
 -.2ex}{1.5ex plus .2ex}{\normalsize\sc}}
\makeatother
%%%%%%%%% This numbers equations by sections %%%%%%%%%%%%%
\makeatletter
\@addtoreset{equation}{section}

\makeatother 
%%%%%%%%% Other definitions and commands %%%%%%%%%%%%%%%%%%
\newcommand{\nc}{\newcommand}

%%%%%%%%%% Equations %%%%%%%%%%%%%%%%%%%%%%%%%%%%%%%%%%%%%%%
\nc{\bea}{\begin{eqnarray}}
\nc{\eea}{\end{eqnarray}}
\nc{\be}{\begin{equation}}
\nc{\ee}{\begin{equation}}
%%%%%%%%%% Other useful abbreviations %%%%%%%%%%%%%%%%%%%%%%%%
\nc{\tr}{\mathop{\mbox{tr}}\nolimits}
\nc{\ad}{\mathop{\mbox{ad}}\nolimits}
\nc{\Tr}{\mathop{\mbox{Tr}}\nolimits}
\nc{\Det}{\mathop{\mbox{Det}}\nolimits}
\nc{\rk}{\mathop{\mbox{rk}}\nolimits}
\nc{\ra}{\rightarrow}
\nc{\Ra}{\Rightarrow}
\nc{\ot}{\otimes}
%\rnc{\ss}{\subset} \nc{\nul}{\noindent\underline}
\nc{\non}{\nonumber\\}
\nc{\ZZ}{\mathbb{Z}}
\nc{\RR}{\mathbb{R}}
%%%%%%%%%% Favorite layout for subsections %%%%%%%%%%%%%

%%%%%%%%%%%%%%%%% Theorems and the Like %%%%%%%%%%%%%%%%%

\newtheorem{theorem}{Theorem}
\newtheorem{proposition}{Proposition}
\newtheorem{lemma}{Lemma}

\newtheorem{conjecture}{Conjecture}

\newtheorem{question}{Question}
\newenvironment{proof}[1][Proof]{\begin{trivlist}
\item[\hskip\labelsep {\bfseries #1}]}{\end{trivlist}}
%%%%%%%%%%%%%%%%%%%%%%%%%%%%%%%%%%%

\def\1#1{^{(#1)}}
\def\la{\langle}
\def\ra{\rangle}
%%%%%%%%%%%%%%%%  QED  %%%%%%%%%%%%%%%%%%%

\textheight=24truecm
\textwidth=16truecm
\hoffset=-2truecm
\voffset=-2truecm
%%%%%%%%%%%%%%%%%%%%%%%%%%%%%%%%%%%%%%%%
\begin{document}
\title{Numerical semigroups generated by squares, cubes and quartics\\ of
three consecutive integers}
\author{Leonid G. Fel\\ \\
Department of Civil Engineering, Technion, Haifa 32000, Israel}
\date{}
\maketitle
\vspace{-1cm}
%%%%%%%%%%%%%%%%%%%%%%%%%%%%%%%%%%%
\begin{abstract} 
We derive the polynomial representations for minimal relations of generating set
of numerical semigroups $R_n^k=\la(n-1)^k,n^k,(n+1)^k\ra$, $k=2,3,4$, $n\geq 3$.
We find also the polynomial representations for degrees of syzygies in the 
Hilbert series $H\left(z,R_n^k\right)$ of these semigroups, their Frobenius 
numbers $F\left(R_n^k\right)$ and genera $G\left(R_n^k\right)$.\\
\noindent
{\bf Keywords:} Nonsymmetric numerical semigroups, Frobenius number, genus 
\\
{\bf 2010 Mathematics Subject Classification:} Primary -- 20M14, Secondary --
11P81.
\end{abstract}
%%%%%%%%%%%%%%%%%%%%%%%%%%%%%%%%%%%%%%%%%%%%%%%%% 
\section{Symmetric and nonsymmetric numerical semigroups $\langle(n-1)^k,n^k,
(n+1)^k\rangle$}\label{l1}
%%%%%%%%%%%%%%%%%%%%%%%%%%%%%%%%%%%%%%%%%%%%%%%%%
Numerical semigroups $S_3=\la d_1,d_2,d_3\ra$, generated by three integers, 
exhibit a non-trivial example of semigroups with well established relations 
\cite{fel06} between degrees of syzygies and values of generators. In this 
regards, the relations $f_k(d_1,d_2,d_3)=0$, imposed on generators, may increase
a number of semigroups with explicitly computable Hilbert series $H(z,S_3)$, 
Frobenius numbers $F(S_3)$ and genera $G(S_3)$. Usually the generators of such 
semigroups are represented as elements of some ordered sets: arithmetic 
\cite{bra42}, almost arithmetic \cite{rob56} or geometric \cite{op08}
progressions, Pythagorean triples \cite{kraf85,fel06}, Fibonacci \cite{mrr07,
f09} or Lucas \cite{f09} numbers and others.

Recently, the two 3-generated numerical semigroups were treated \cite{le15} to 
establish explicit expressions for their Frobenius numbers. These are semigroups
$R_n^2$ and $R_n^3$, generated by squares and cubes of three consecutive integers,
\bea
R_n^2=\la (n-1)^2,n^2,(n+1)^2\ra,\quad 
R_n^3=\la (n-1)^3,n^3,(n+1)^3\ra,\quad n\geq 3.\label{x1}
\eea
Using the Euclidean algorithm with negative \cite{ro78} and positive 
\cite{rr09} remainders for computing the Frobenius numbers, the authors 
\cite{le15} were able to find polynomial expressions in $n$ for $F\left(R_n^2
\right)$ and $F\left(R_n^3\right)$ on residue class of $n$ modulo 4 and 18, 
\bea
F_j\left(R_n^2\right)=\sum_{i=1}^3A^j_in^i,\;\;j=n\;(\bmod\;4),\qquad\;\; 
n\neq 3,4,5,6,9,13,\quad A^j_i\in{\mathbb Q},\label{x2}
\eea
\bea
F_j\left(R_n^3\right)=\sum_{i=1}^5B^j_in^i,\;\;j=n\;(\bmod\;18),\quad 
B^j_i\in{\mathbb Q},\quad n\neq 3,4,5,6,7,8,9,10,\qquad\label{x3}\\
\hspace{.5cm}
11,18,26,27,36,45,54,63,72,90,108,126,144,162,180,198,216,234,252,270.\nonumber
\eea

A long list of sophisticated formulas for $F_j\left(R_n^2\right)$ and $F_j\left(
R_n^3\right)$ in \cite{le15}, accompanied by 34 exclusions (6 cases for $R_n^2$ 
and 28 cases for $R_n^3$), poses a question to find another representation (Rep)
which allows incorporate all exclusions. Another reason to treat this problem 
again is to extend it on semigroups $R_n^4$ and find $H\left(z,R_n^4\right)$, 
$F\left(R_n^4\right)$ and $G\left(R_n^4\right)$, and to discuss how to deal with
a general case of $R_n^k$, $k\geq 5$.

Note that the excluding values of $n$ in (\ref{x2},\ref{x3}) give rise to the 
4 symmetric semigroups, $R_3^2$, $R_4^2$, $R_5^2$ and $R_3^3$. The rest of 30 
semigroups are nonsymmetric.
 
Simple considerations (Propositions \ref{pro1}, \ref{pro2}) show that no more 
symmetric numerical semigroups $R_n^2$ and $R_n^3$ do exist. To prove that we 
recall necessary conditions when a semigroup $\langle d_1,d_2,d_3\rangle$ 
becomes symmetric, 
\bea
&&a)\quad d_3\in\la d_1,d_2\ra,\quad\gcd(d_1,d_2)=1,\quad d_j>3,\qquad
\mbox{or}\label{x4}\\
&&b)\quad\la d_1,d_2,d_3\ra,\;d_j>3,\quad\mbox{satisfies Lemma \ref{lem1} 
adapted to three generators},\nonumber
\eea

\begin{lemma}\label{lem1}\cite{wata73}
%%%%%%%%%%%%%%%%%%%%%%%%%%%%%%%%%%%%%%%%%%%%%%%%%%%%%%%%
Let a numerical semigroup $\la d_1,d_2,d_3\ra$ be given such that $d_1=a\delta
_1,d_2=a\delta_2$, $\delta_j\in{\mathbb Z}_+$ and $\gcd(\delta_1,\delta_2)=
\gcd(a,d_3)=1$. Then $\la d_1,d_2,d_3\ra$ is symmetric if and only if $d_3\in
\la\delta_1,\delta_2\ra$.
\end{lemma}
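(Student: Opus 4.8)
The plan is to read the statement through the theory of \emph{gluings} of numerical semigroups, with an Ap\'ery-set computation at its core. Write $T=\langle\delta_1,\delta_2\rangle$, so that $\langle d_1,d_2\rangle=aT$, and recall two standard facts to be quoted: every two-generated numerical semigroup is symmetric (so $T$ is, with Frobenius number $\delta_1\delta_2-\delta_1-\delta_2$), and a numerical semigroup $S$ is symmetric if and only if, for some (equivalently every) $n\in S\setminus\{0\}$, its Ap\'ery set $\mathrm{Ap}(S,n)=\{w_0<w_1<\cdots<w_{n-1}\}$ satisfies $w_i+w_{n-1-i}=w_{n-1}$ for all $i$.

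The key first step is to compute $\mathrm{Ap}(S,d_3)$ for $S=\langle d_1,d_2,d_3\rangle$. Any element of $S$ that uses a strictly positive multiple of $d_3$ leaves $\mathrm{Ap}(S,d_3)$ after subtracting $d_3$, so $\mathrm{Ap}(S,d_3)\subseteq\langle d_1,d_2\rangle=aT$; writing such an element as $av$ with $v\in T$, and using $\gcd(a,d_3)=1$ (which forces $a\mid\gamma$ whenever $av-\gamma d_3\in aT$), a short check gives $av\in\mathrm{Ap}(S,d_3)$ precisely when $v-\gamma d_3\notin T$ for every integer $\gamma\ge1$. Call $B$ the set of such $v$; taking in each residue class modulo $d_3$ the smallest element of $T$ shows $|B|=d_3$, whence $\mathrm{Ap}(S,d_3)=aB$, and since multiplication by $a$ is an order isomorphism, $S$ is symmetric if and only if $B$ is. I would also note that $B$ equals the Ap\'ery set $\mathrm{Ap}(T'',d_3)$ of $T'':=\langle\delta_1,\delta_2,d_3\rangle$ — the least element of $T''$ in a class modulo $d_3$ already lies in $T$, for otherwise subtracting $d_3$ would produce a smaller such element — so altogether $S$ is symmetric if and only if $T''$ is symmetric.

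With this reduction in hand the ``if'' direction is immediate: if $d_3\in T$ then $T''=T$ is two-generated, hence symmetric, hence so is $S$. Equivalently, $S=aT+d_3\mathbb{N}$ presents $S$ as the gluing of the two symmetric semigroups $T$ and $\mathbb{N}$, and a gluing of symmetric semigroups is symmetric.

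The ``only if'' direction is where I expect the real work. Suppose $S$, equivalently $T''$, is symmetric. In the relevant case the generating system is minimal (otherwise $S$ is two-generated and the statement is easy), and then translating the non-redundancy of the generators shows $T''$ has embedding dimension three; by Herzog's classification a symmetric numerical semigroup of embedding dimension three is a complete intersection, hence a gluing, so after relabeling, some pair of its generators has a common factor $g>1$ with the third generator lying in that pair divided by $g$. Since $\gcd(\delta_1,\delta_2)=1$, that pair must involve $d_3$, and the task is then to argue, using the coprimality hypotheses, that the gluing must actually be the one attached to $d_1=a\delta_1,\ d_2=a\delta_2$, i.e. that $d_3$ is the ``outside'' generator and therefore $d_3\in\langle\delta_1,\delta_2\rangle$. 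Pinning down exactly which gluing occurs — the divisibility bookkeeping around $g$ and $\gcd(a,d_3)=1$, and disposing of the small-generator degeneracies — is the main obstacle; a more computational substitute would be to show directly that the reflection identity for $\mathrm{Ap}(T'',d_3)=B$ fails whenever $d_3\notin T$ (under minimality), by exhibiting a gap of $T$ whose associated element of $B$ has no image under $w\mapsto\max B-w$.
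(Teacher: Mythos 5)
The paper does not actually prove this lemma --- it is imported from Watanabe's article with a citation and no argument --- so there is no in-paper proof to compare yours against; your proposal has to be judged on its own. Measured that way, your Ap\'ery-set reduction is correct and is essentially a proof of Watanabe's lemma itself: $\mathrm{Ap}(S,d_3)=a\,\mathrm{Ap}(T'',d_3)$ with $T''=\langle\delta_1,\delta_2,d_3\rangle$, hence $S$ is symmetric iff $T''$ is, and the ``if'' direction follows at once because $d_3\in T=\langle\delta_1,\delta_2\rangle$ makes $T''=T$ two-generated. (Watanabe's original statement is only this equivalence for $\langle aH,d\rangle$ with $d\in H$, $\gcd(a,d)=1$; the ``only if'' half of the paper's Lemma is an addition.)

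The gap you flag in the ``only if'' direction is real and, under the stated hypotheses, cannot be closed: that implication is false. Take $S=\langle 20,25,6\rangle$, i.e. $a=5$, $\delta_1=4$, $\delta_2=5$, $d_3=6$, so $\gcd(\delta_1,\delta_2)=\gcd(a,d_3)=1$ and the three generators are minimal. Then $S$ is symmetric: $\mathrm{Ap}(S,6)=\{0,20,25,40,45,65\}$ satisfies the reflection identity (equivalently, $S$ is the gluing of $\langle 2\cdot 3,\,2\cdot 10\rangle$ with $25\mathbb{N}$, since $25=3\cdot 5+10\in\langle 3,10\rangle$ and $\gcd(2,25)=1$; its Frobenius number is $59$), yet $6\notin\langle 4,5\rangle$. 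This is exactly the failure mode you anticipated: Herzog's classification only guarantees a gluing along \emph{some} pair of generators with nontrivial gcd, and here it occurs along $(d_1,d_3)$ rather than $(d_1,d_2)$. The statement becomes true --- and your outline then does close, since $(d_1,d_2)$ is the only admissible pair --- if one adds the hypothesis $\gcd(d_3,\delta_1)=\gcd(d_3,\delta_2)=1$; that extra condition holds in every use the paper makes of the lemma, because there $d_3=n^k$ is coprime to $(n\pm 1)^k$. So your machinery is sound and the ``if'' half is complete, but the ``only if'' half is not merely unfinished: without the additional coprimality hypothesis no proof exists.
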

%%%%%%%%%%%%%%%%%%%%%%%%%%%%%%%%%%%%%%
For semigroups, satisfying Lemma \ref{lem1}, the Frobenius number is given by 
\cite{john60,herz70},
\bea
F\left(\la d_1,d_2,d_3\ra\right)=aF\left(\la\delta_1,\delta_2\ra\right)+(a-1)
d_3,\quad F\left(\la\delta_1,\delta_2\ra\right)=\delta_1\delta_2-\delta_1-
\delta_2.\qquad\label{x5}
\eea
%%%%%%%%%%%%%%%%%%%%%%%%%%%%%%%%%%%%%%%%%%%%%%%%%%%%%%%%%%  
\subsection{Symmetric numerical semigroups $R_n^k$}\label{l11}  
%%%%%%%%%%%%%%%%%%%%%%%%%%%%%%%%%%%%%%%%%%%%%%%%%%%%%%%%%%
Prove an exclusive property of semigroups $R_3^2$, $R_4^2$, $R_5^2$.

\begin{proposition}\label{pro1}
There exist only three symmetric semigroups $R_n^2$, $n=3,4,5$.
\end{proposition}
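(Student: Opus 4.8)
The plan is to match $R_n^2=\la(n-1)^2,n^2,(n+1)^2\ra$ against the necessary conditions (\ref{x4}) for symmetry of a $3$-generated semigroup and reduce the question to a handful of elementary divisibility tests. The preliminary step is the gcd bookkeeping: since consecutive integers are coprime, $\gcd\left((n-1)^2,n^2\right)=\gcd\left(n^2,(n+1)^2\right)=1$, whereas $\gcd\left((n-1)^2,(n+1)^2\right)$ equals $4$ when $n$ is odd and $1$ when $n$ is even (because $\gcd(n-1,n+1)$ divides $2$). Note that every generator exceeds $3$ for $n\geq 3$, so (\ref{x4}) applies.

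First I would dispose of condition (\ref{x4}a), i.e.\ of the possibility that $R_n^2$ is effectively $2$-generated. There are three ways to single out the generator that should lie in the subsemigroup generated by the other two. The smallest generator $(n-1)^2$ cannot be a nonnegative integer combination of the two larger ones. Next, $n^2\in\la(n-1)^2,(n+1)^2\ra$ forces, for size reasons, the coefficient of $(n+1)^2$ to vanish, hence $(n-1)^2\mid n^2$, which is impossible for $n\geq 3$. Finally, $(n+1)^2\in\la(n-1)^2,n^2\ra$ means $(n+1)^2=p(n-1)^2+qn^2$ with $p,q\geq 0$; since $2n^2>(n+1)^2$ for $n\geq 3$ one has $q\in\{0,1\}$, the case $q=1$ amounts to $p(n-1)^2=2n+1$ (possible only for $n\leq 4$, with $n=4$ the sole solution), and the case $q=0$ reads $(p-1)(n-1)^2=(n+1)^2-(n-1)^2=4n$ (possible only for $n\leq 5$, with $n=3$ the sole solution). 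So (\ref{x4}a) contributes exactly $n=3$ and $n=4$.

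Next I would treat condition (\ref{x4}b). The only pair of generators with a nontrivial common factor is $\{(n-1)^2,(n+1)^2\}$, and only for $n$ odd, so in the Watanabe form $d_1=a\delta_1$, $d_2=a\delta_2$ one is forced to take $\{d_1,d_2\}=\{(n-1)^2,(n+1)^2\}$, $d_3=n^2$, $a\in\{2,4\}$. The value $a=2$ is impossible: for $n$ odd both $(n-1)^2/2$ and $(n+1)^2/2$ are even, contradicting $\gcd(\delta_1,\delta_2)=1$. For $a=4$, write $n=2m+1$; then $\delta_1=m^2$, $\delta_2=(m+1)^2$, the constraints $\gcd(\delta_1,\delta_2)=1$ and $\gcd(4,n^2)=1$ hold automatically, and Lemma \ref{lem1} tells us $R_n^2$ is symmetric iff $(2m+1)^2\in\la m^2,(m+1)^2\ra$. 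Writing $(2m+1)^2=pm^2+q(m+1)^2$ with $p,q\geq 0$, one has $q\leq 3$ because $4(m+1)^2>(2m+1)^2$, and each of the four values of $q$ reduces to a divisibility $m^2\mid(\alpha m+\beta)$ with fixed small integers $\alpha,\beta$ not both zero, which can hold only for small $m$; the surviving solutions are $m=1$ (that is $n=3$) and $m=2$ (that is $n=5$). So (\ref{x4}b) contributes exactly $n=3$ and $n=5$.

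Putting the two conditions together, the only candidates for symmetry are $n=3,4,5$, and a direct check finishes the proof: $R_3^2=\la 4,9,16\ra=\la 4,9\ra$ and $R_4^2=\la 9,16,25\ra=\la 9,16\ra$ are $2$-generated (using $16=4\cdot 4$ and $25=9+16$), hence symmetric, while $R_5^2=\la 16,25,36\ra$ satisfies Lemma \ref{lem1} with $a=4$, $\delta_1=4$, $\delta_2=9$ and $d_3=25=4\cdot 4+9\in\la 4,9\ra$, hence is symmetric too. The main obstacle is the case $a=4$ of (\ref{x4}b): this is the one place where size estimates alone do not settle the matter, the four sub-cases in $q$ must be handled with some care, and one should notice that the degenerate solution $m=1$ just recovers $R_3^2$, already produced by (\ref{x4}a). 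Everything else --- in particular the whole even-$n$ range, where the gcd computation kills (\ref{x4}b) outright --- is then forced.
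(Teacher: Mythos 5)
Your proposal is correct and follows essentially the same route as the paper: reduce symmetry to the two conditions (\ref{x4}a) and (\ref{x4}b) and dispose of the resulting Diophantine constraints by elementary size and divisibility arguments (the paper instead shifts variables and uses sign analysis for (\ref{x4}a) and the quadratic formula in $p$ for (\ref{x4}b), but this is only a difference in execution). If anything, your casework is slightly more systematic than the paper's --- you check all three choices of which generator might be redundant in (\ref{x4}a) and explicitly rule out $a=2$ in (\ref{x4}b), steps the paper leaves implicit.
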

%%%%%%%%%%%%%%%%%%%%%%%%%%%%%%%%%%%%%%%%%%
\begin{proof}
Note that semigroups $R_n^2$, $n=3,4$, are symmetric due to requirement 
(\ref{x4}a). Find more $n$ which satisfy (\ref{x4}a),
\bea
(n+1)^2=a_1(n-1)^2+a_2n^2,\quad a_1,a_2\in{\mathbb N},\quad n>4.\label{x6}
\eea
Simplifying the last equality we obtain the Diophantine Eq.
\bea
(a_1+a_2-1)(n-4)^2+2(3a_1+4a_2-5)(n-4)+9a_1+16a_2-25=0,\nonumber
\eea
with constraints (\ref{x6}) on three variables $a_1,a_2,n$ which has no 
solutions.

Consider another way to symmetrize $R_n^2$ by providing condition (\ref{x4}b), 
which may occur only when $n=2p+1$ and results in the Diophantine Eq. in $b_1,
b_2,p$,
\bea
(2p+1)^2=b_1p^2+b_2(p+1)^2,\quad b_1,b_2\in{\mathbb N},\quad p\geq 2,\label{x7}
\eea
Solving (\ref{x7}) as a quadratic equation in $p$, we get
\bea
p=\frac{2-b_2\pm\Theta}{b_1+b_2-4},\quad \Theta^2=b_1+b_2-b_1b_2,\nonumber
\eea
Combining the last expression with constraints in (\ref{x7}) we find only one 
appropriate solution, $b_1=4$, $b_2=1$, $p=2$, which gives rise to semigroup
$R_5^2$.
\end{proof}
The next Propositions deal with symmetric semigroups $R_n^k$, $k=3,4$.
%%%%%%%%%%%%%%%%%%%%%%%%%%%%%%%%%%%%%%
\begin{proposition}\label{pro2}
There exists only one symmetric semigroup $R_n^3$, $n=3$.
\end{proposition}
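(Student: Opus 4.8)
I would follow the two-pronged pattern of Proposition~\ref{pro1}, checking in turn the two necessary conditions (\ref{x4}a) and (\ref{x4}b) for $\la d_1,d_2,d_3\ra=\la(n-1)^3,n^3,(n+1)^3\ra$ to be symmetric. One starts by recording that $R_3^3=\la 8,27,64\ra=\la 8,27\ra$, since $64=8\cdot 8=8(3-1)^3$ is a redundant generator and a $2$-generated numerical semigroup is symmetric; it then remains to show $R_n^3$ is nonsymmetric for all $n\ge 4$. For (\ref{x4}a) the pair $(n-1)^3,n^3$ is automatically coprime, so one asks whether $(n+1)^3=a_1(n-1)^3+a_2n^3$ is solvable in $a_1,a_2\in\ZZ_{\ge 0}$. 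Since $(n+1)^3<2n^3$ for $n\ge 4$ one has $a_2\le 1$; if $a_2=1$ then $a_1(n-1)^3=3n^2+3n+1$, impossible because for $n\ge 7$ the right side lies strictly between $0$ and $(n-1)^3$ while $n=4,5,6$ fail by inspection; if $a_2=0$ then $(n-1)^3\mid(n+1)^3$, and $\gcd(n-1,n+1)\in\{1,2\}$ forces $(n-1)^3\le 8$, hence $n\le 3$. So (\ref{x4}a) gives nothing for $n\ge 4$.

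Next I would analyse how Lemma~\ref{lem1} could apply. Two consecutive cubes are coprime, and $\gcd\bigl((n-1)^3,(n+1)^3\bigr)=\gcd(n-1,n+1)^3$ equals $8$ for $n$ odd and $1$ for $n$ even, so a common factor $a\ge 2$ can be shared only by the pair $\{(n-1)^3,(n+1)^3\}$ with $n=2p+1$ odd; then $\gcd(\delta_1,\delta_2)=1$ forces $a=8$, $\delta_1=p^3$, $\delta_2=(p+1)^3$, with third generator $d_3=n^3$ and $\gcd(a,d_3)=1$ automatic since $n$ is odd. Thus (\ref{x4}b) is equivalent to $n^3\in\la p^3,(p+1)^3\ra$, i.e.\ to solvability of
\begin{equation}
(2p+1)^3=b_1p^3+b_2(p+1)^3,\qquad b_1,b_2\in\ZZ_{\ge 0},\qquad p\ge 2,
\end{equation}
the degenerate value $p=1$ merely reproducing $R_3^3$.

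The heart of the proof is this cubic Diophantine equation; unlike the quadratic-in-$p$ equation in Proposition~\ref{pro1} it does not close under the quadratic formula, so I would combine a size bound with a congruence. From $b_2(p+1)^3\le(2p+1)^3<8(p+1)^3$ one gets $b_2\le 7$, and reducing modulo $p^3$, via $(2p+1)^3\equiv 12p^2+6p+1$ and $(p+1)^3\equiv 3p^2+3p+1$, gives
\begin{equation}
(12-3b_2)p^2+(6-3b_2)p+(1-b_2)\equiv 0\pmod{p^3}.
\end{equation}
For $p$ beyond a small explicit threshold the left side is nonzero with absolute value $<p^3$, a contradiction; hence it vanishes as an integer, and checking the quadratic $(12-3b_2)p^2+(6-3b_2)p+(1-b_2)=0$ for each $b_2\in\{0,\dots,7\}$ shows it has no integer root $p\ge 2$ (discriminants non-square, or roots negative or half-integral), while the few remaining small $p$ are ruled out directly, the forced value of $b_1$ coming out negative or non-integral. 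The step I expect to be the main obstacle is exactly this one: pinning the bound tightly enough that only a short finite check survives, together with the bookkeeping needed to confirm that Lemma~\ref{lem1} admits no other pairing of the three generators.
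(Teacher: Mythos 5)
Your proposal is correct, and it reaches the same two Diophantine conditions as the paper but disposes of them by a genuinely different method. The paper's proof (in the appendix) substitutes $t=n-3$ (resp.\ $p=q-1$) to turn each condition into a cubic in one variable whose coefficients are linear in $(e_1,e_2)$ (resp.\ $(c_1,c_2)$), partitions the positive integer lattice into regions where all coefficients share one sign --- so Descartes' rule of signs excludes positive roots --- and checks the finitely many leftover lattice points numerically (5 cubics for condition (\ref{x4}a), 23 for condition (\ref{x4}b)). You instead handle (\ref{x4}a) by a size-and-divisibility argument ($a_2\le 1$, then $0<3n^2+3n+1<(n-1)^3$ for $n\ge 7$, and $(n-1)^3\mid(n+1)^3$ forces $n\le 3$), which is more elementary and essentially computation-free; and you handle (\ref{x4}b) by bounding $b_2\le 7$, reducing modulo $p^3$, and observing that the residual quadratic must vanish identically for $p$ past an explicit threshold, leaving a finite check. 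Your route has two advantages: the part-(a) argument avoids case decomposition altogether, and you make explicit a point the paper passes over silently, namely why Lemma~\ref{lem1} can only apply through the pair $\{(n-1)^3,(n+1)^3\}$ with $n$ odd and $a=8$ (consecutive cubes being coprime). The costs are symmetric: your finite tail ($2\le p\le 13$ against eight values of $b_2$, plus eight quadratics) is comparable in bulk to the paper's 28 numerically checked cubics, and you leave the threshold and the small-$p$ verifications as routine but uncarried computations --- the same standard of ``straightforward numerical verification'' the paper itself invokes, so this is not a gap.
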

%%%%%%%%%%%%%%%%%%%%%%%%%%%%%%%%%%%%%%%%%%
\begin{proposition}\label{pro3}
There exist only three symmetric semigroups $R_n^4$, $n=3,5,7$.
\end{proposition}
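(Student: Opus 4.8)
The plan is to mirror the arguments for Propositions \ref{pro1} and \ref{pro2}: a symmetric $R_n^4$ must satisfy one of the necessary conditions (\ref{x4}a), (\ref{x4}b), so it suffices to (i) verify that $R_3^4,R_5^4,R_7^4$ really are symmetric and (ii) show that for every other $n\geq 3$ neither condition holds. Throughout I shall use that $\gcd\bigl((n-1)^4,n^4\bigr)=\gcd\bigl(n^4,(n+1)^4\bigr)=1$, while $\gcd\bigl((n-1)^4,(n+1)^4\bigr)=\gcd(n-1,n+1)^4$ equals $1$ for even $n$ and $16$ for odd $n$.

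For (i): when $n=3$ we have $256=16\cdot 16$, so $(n+1)^4\in\la(n-1)^4,n^4\ra$ and $R_3^4=\la 16,81,256\ra=\la 16,81\ra$ is two-generated, hence symmetric, which realizes (\ref{x4}a). For $n=5,7$ one applies Lemma \ref{lem1} with $a=16$, pairing the two generators $(n-1)^4$ and $(n+1)^4$: then $\delta_1=\bigl(\tfrac{n-1}{2}\bigr)^4$, $\delta_2=\bigl(\tfrac{n+1}{2}\bigr)^4$, $\gcd(\delta_1,\delta_2)=\gcd(16,n^4)=1$, and the membership $d_3=n^4\in\la\delta_1,\delta_2\ra$ holds since $625=81+16\cdot 34\in\la 16,81\ra$ and $2401=4\cdot 256+17\cdot 81\in\la 81,256\ra$; formula (\ref{x5}) then additionally yields $F(R_5^4)$ and $F(R_7^4)$.

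For (ii), condition (\ref{x4}a) requires $(n+1)^4=a_1(n-1)^4+a_2n^4$ with $a_1,a_2\in{\mathbb N}$. If $a_1=0$ then $n^4\mid(n+1)^4$, which is impossible for $n\geq 3$; hence $a_1\geq 1$ and $a_2n^4\leq(n+1)^4-(n-1)^4=8n^3+8n$, which is $<n^4$ once $n\geq 9$, so $a_2=0$ and $(n-1)^4\mid(n+1)^4$. Because $\gcd(n-1,n+1)\mid 2$ this forces $(n-1)^4\mid 16$, i.e. $n\leq 3$ --- a contradiction for $n\geq 9$. The remaining cases $4\leq n\leq 8$ are cleared by direct substitution, leaving $n=3$ as the only $n$ meeting (\ref{x4}a).

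For condition (\ref{x4}b), Lemma \ref{lem1} can be applied non-trivially only for odd $n$, by pairing $(n-1)^4$ with $(n+1)^4$; the coprimality of the cofactors then forces $a$ to be the full common factor $16$ (for even $n$ every pair of generators is coprime, so Lemma \ref{lem1} degenerates to (\ref{x4}a)). Writing $n=2m+1$, the symmetry criterion $d_3\in\la\delta_1,\delta_2\ra$ becomes
\[
(2m+1)^4=c_1m^4+c_2(m+1)^4,\qquad c_1,c_2\in{\mathbb Z}_{\geq 0}.
\]
The heart of the proof is to bound $c_1,c_2$. Reducing modulo $m$ gives $c_2\equiv 1\ (\bmod\ m)$; reducing modulo $m+1$ gives $c_1\equiv 1\ (\bmod\ m+1)$; and $c_2(m+1)^4\leq(2m+1)^4<\bigl(2(m+1)\bigr)^4$ gives $c_2\leq 15$. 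Hence for $m\geq 16$ one gets $c_2=1$, then $c_1m^4=(2m+1)^4-(m+1)^4=15m^4+28m^3+18m^2+4m$ forces $15<c_1<17$, so $c_1=16$, which contradicts $c_1\equiv 1\ (\bmod\ m+1)$ because $m+1\geq 17$ cannot divide $15$. The finitely many cases $1\leq m\leq 15$, i.e. $n\in\{3,5,\dots,31\}$, are settled by inspection, where only $m=1,2,3$ survive. Together with the (\ref{x4}a) analysis and the exclusion of even $n$ from (\ref{x4}b), this shows that $R_n^4$ is symmetric precisely for $n=3,5,7$. The part I expect to be most laborious is this last finite check in case (b) --- verifying non-memberships such as $9^4\notin\la 256,625\ra$ up to $n=31$; shaving the bounds to shrink that range, or finding one divisibility obstruction covering all $n\geq 9$, would be the tidiest way to avoid it.
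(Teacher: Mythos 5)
Your proof is correct, but it follows a genuinely different route from the paper's. The paper reduces both conditions to quartic Diophantine equations in a shifted variable ($u=n-3$ for condition (\ref{x4}a), $v=q-2$ for (\ref{x4}b)), partitions the lattice of coefficient pairs $(f_1,f_2)$, resp.\ $(h_1,h_2)$, into regions where the coefficient sequence has no sign change --- so Descartes' rule of signs excludes positive roots --- and then disposes of the leftover regions by checking $15$, resp.\ $88$, explicit quartic equations numerically. You instead exploit arithmetic structure: for (\ref{x4}a) the size bound $a_2n^4\leq 8n^3+8n$ kills $a_2$ for $n\geq 9$ and a gcd argument forces $(n-1)^4\mid 16$; for (\ref{x4}b) the congruences $c_2\equiv 1\ (\bmod\ m)$, $c_1\equiv 1\ (\bmod\ m+1)$ together with $c_2\leq 15$ pin down $c_2=1$, $c_1=16$ for $m\geq 16$ and then yield a divisibility contradiction. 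Your method cuts the residual finite verification from $103$ cases to about $20$ small ones ($4\leq n\leq 8$ and $4\leq m\leq 15$), and it also supplies the explicit certificates of symmetry for $n=3,5,7$ ($625=81+34\cdot 16$, $2401=4\cdot 256+17\cdot 81$) that the paper only asserts via its exceptional-semigroup tables; what the paper's scheme buys in exchange is a uniform template that transfers verbatim to the cubic case of Proposition \ref{pro2}. The one step you should still write out is the deferred inspection for $4\leq m\leq 15$ (e.g.\ $m=4$: $c_2\in\{1,5,9\}$ by the congruence and the bound, and none of $6561-625$, $6561-3125$, $6561-5625$ is divisible by $256$), but this is exactly parallel to the paper's own ``straightforward numerical verification'' and is not a gap in the argument.
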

Their proofs are similar to proof of Proposition \ref{pro1} but much more 
cumbersome and therefore are given in \ref{app1}.
%%%%%%%%%%%%%%%%%%%%%%%%%%%%%%%%%%%%%%%%%%
\subsection{Nonsymmetric numerical semigroups generated by three integers}
\label{l12}
%%%%%%%%%%%%%%%%%%%%%%%%%%%%%%%%%%%%%%%%%%%%%%%%%%%%%%%%%%
A brief analysis in the previous section focuses us on nonsymmetric semigroups 
only. In the present paper we calculate the Hilbert series for such semigroups 
$R_n^k$, $k=2,3,4$, making use of approach of minimal relations for three 
generators $d_1,d_2,d_3$. Recall this approach following author's article 
\cite{fel06}.

Let a nonsymmetric numerical semigroup $S_3=\la d_1,d_2,d_3\ra$, $\gcd(d_1,d_2,
d_3)=1$, $d_j\geq 3$, be given by matrix of minimal relations, ${\cal A}_3$,
where $a_{ij}\in{\mathbb Z}_+$,
\bea
{\cal A}_3\left(\begin{array}{c}d_1\\d_2\\d_3\end{array}\right)\!=\!
\left(\begin{array}{r}0\\0\\0\end{array}\right)\!,\;\;{\cal A}_3=\!\left(
\begin{array}{rrr}a_{11} & -a_{12} & -a_{13}\\-a_{21} & a_{22} & -a_{23}\\
-a_{31} & -a_{32} & a_{33} \end{array}\right),\;\;
\left\{\begin{array}{r}\gcd(a_{11},a_{12},a_{13})=1\\ \gcd(a_{21},a_{22},
a_{23})=1\\\gcd(a_{31},a_{32},a_{33})=1\end{array}\right.,\quad\label{x8}
\eea
\bea
a_{11}&=&\min\left\{v_{11}\;|\;v_{11}\geq 2,\;v_{11}d_1=v_{12}d_2+v_{13}d_3,
\;v_{12},v_{13}\in {\mathbb N}\cup\{0\}\right\}\;,\nonumber\\
a_{22}&=&\min\left\{v_{22}\;|\;v_{22}\geq 2,\;v_{22}d_2=v_{21}d_1+v_{23}d_3,
\;v_{21},v_{23}\in {\mathbb N}\cup\{0\}\right\}\;,\label{x9}\\
a_{33}&=&\min\left\{v_{33}\;|\;v_{33}\geq 2,\;v_{33}d_3=v_{31}d_1+v_{32}d_2,
\;v_{31},v_{32}\in {\mathbb N}\cup\{0\}\right\}\;.\nonumber
\eea
All matrix elements $a_{ij}$ are non-negative integers \cite{john60,fel06} such 
that
\bea
&&a_{11}=a_{21}+a_{31},\quad a_{22}=a_{12}+a_{32},\quad a_{33}=a_{13}+a_{23},
\quad a_{jj}\geq 2,\quad a_{ij}\geq 1,\;i\neq j,\nonumber\\
&&d_1=a_{22}a_{33}-a_{23}a_{32},\qquad d_2=a_{33}a_{11}-a_{31}a_{13},\qquad 
d_3=a_{11}a_{22}-a_{12}a_{21}.\label{x10}
\eea
Then the rational Rep of the Hilbert series $H(z,S_3)$, the Frobenius number 
$F(S_3)$ and genus $G(S_3)$ are given by formulas \cite{fel06},
\bea
&&H(z,S_3)=\left(1-z^{a_{11}d_1}-z^{a_{22}d_2}-z^{a_{33}d_3}+z^{b_{11}}+z^{b_
{22}}\right)\prod_{i=1}^3\left(1-z^{d_i}\right)^{-1},\nonumber\\
&&b_{11}=D_0+D_1,\quad b_{22}=D_0+D_2,\quad a_{11}d_1+a_{22}d_2+a_{33}d_3=
b_{11}+b_{22},\nonumber\\
&&F(S_3)=\max\left\{F_1,F_2\right\},\quad 2G(S_3)=1+D_0+D_1+D_2-D_3,\label{x11}
\\
&&F_1=b_{11}-D_3,\qquad F_2=b_{22}-D_3.\nonumber\\
&&D_0=a_{11}a_{22}a_{33},\quad D_1=a_{12}a_{23}a_{31},\quad D_2=a_{13}a_{32}
a_{21},\quad D_3=d_1+d_2+d_3.\nonumber
\eea
Based on (\ref{x11}) we reduce a large number of exclusive semigroups 
\cite{le15} with $F(S_3)$ which differ from polynomials (\ref{x2}). This 
exclusion may happen when in different ranges of $n$ a difference $D_1(n)-D_2(n
)$ may change its sign. In other words, the both sequences $F_1(n)$ and $F_2(n)$
contribute to the polynomial Rep of $F(S_3)$.
%%%%%%%%%%%%%%%%%%%%%%%%%%%%%%%%%%%%%%%%%%%%%%%%%%%%%%%%%%
\section{Numerical semigroups $R_n^2$, $n\geq 6$}\label{l2}
%%%%%%%%%%%%%%%%%%%%%%%%%%%%%%%%%%%%%%%%%%%%%%%%%%%%%%%
Write the third relation in (\ref{x9}) for $R_n^2$, $a_{33}(n+1)^2=a_{32}n^2+
a_{31}(n-1)^2$, i.e.,
\bea
a_{32}n^2=(a_{33}-a_{31})\left(n^2+1\right)+2(a_{33}+a_{31})n.\nonumber
\eea
Choose $a_{33}=a_{31}$ that results in $a_{32}=4a_{33}/n$. The whole matrix 
${\cal A}_3$ satisfies relations (\ref{x8}),
\bea
\left(\begin{array}{ccc}a_{21}+a_{33}\;&\;4a_{33}/n-a_{22}\;&\;a_{23}-a_{33}\\
-a_{21}\;&\;a_{22}\;&\;-a_{23}\\-a_{33}\;&\;-4a_{33}/n\;&\;a_{33}\end{array}
\right),\quad\left.\begin{array}{rcl}(n-1)^2&=&a_{33}\left(a_{22}-4a_{23}/n
\right),\\ n^2&=&a_{33}(a_{21}+a_{23}),\\(n+1)^2&=&a_{33}\left(a_{22}+4a_{21}/n
\right).\end{array}\right.\nonumber
\eea
In order to provide all entries in ${\cal A}_3$ be integers we consider four
different cases, $n=j\;(\bmod\;4)$.

{\bf 1}. $n=4m$, $a_{33}=a_{31}=m$, $a_{32}=1$.
\bea
\left(\begin{array}{ccc}a_{21}+m\;&\;1-a_{22}\;&\;a_{23}-m\\-a_{21}\;&\;a_{22}
\;&\;-a_{23}\\-m\;&\;-1\;&\;m\end{array}\right),\quad\left.\begin{array}{rcl}
(4m-1)^2&=&a_{22}m-a_{23},\\(4m+1)^2&=&a_{22}m+a_{21}.\end{array}\right.
\label{y1}
\eea
Two Eqs. (\ref{y1}) for $a_{21}$, $a_{22}$, $a_{23}$ allow to choose the 
following parameterization,
\bea
a_{21}=km+1,\quad a_{22}=16m+8-k,\quad a_{23}=(16-k)m-1,\quad 1\leq k\leq 15,
\nonumber
\eea
\bea
\left(\begin{array}{ccc}(k+1)m+1\;&\;-(16m+7-k)\;&\;-[(k-15)m+1]\\-(km+1)\;&\;
16m+8-k\;&\;-[(16-k)m-1]\\-m\;&\;-1\;&\;m\end{array}\right).\nonumber
\eea
Since $a_{13},a_{23}\geq 1$ there is only one solution $k=15$, $m\geq 2$, that 
gives
\bea
\left(\begin{array}{ccc}16m+1\;&\;-8(2m-1)\;&\;-1\\-(15m+1)\;&\;16m-7\;&\;
-(m-1)\\-m\;&\;-1\;&\;m\end{array}\right),\left.\begin{array}{rcl}G&=&
4m(34m^2-21m+2),\\F&=&20,\quad m=1,\\F&=&272m^3-168m^2+m-2,\;\;m\geq 2.
\end{array}\right.\nonumber
\eea
For the rest semigroups we skip the parameterization details of $a_{2j}$ and 
give the final formulas.

{\bf 2}. $n=4m+2$, $a_{33}=a_{31}=2m+1$, $a_{32}=2$.
\bea
\left(\begin{array}{ccc}9m+5&-(8m-1)&-(m+1)\\-(7m+4)&8m+1&-m\\-(2m+1)&-2&2m+1
\end{array}\right),\left.\begin{array}{rcl}G&=&m(80m^2+71m+16),\\
F&=&312,\quad m=1,\\F&=&160m^3+128m^2+10m-9,\;\;m\geq 2.\end{array}\right.
\nonumber
\eea

{\bf 3}. $n=4m+1$, $a_{33}=a_{31}=4m+1$, $a_{32}=4$.
\bea
\left(\begin{array}{ccc}7m+2\;&\;-4(m-1)\;&\;-(3m+1)\\-(3m+1)\;&\;4m\;&\;-m\\
-(4m+1)\;&\;-4\;&\;4m+1\end{array}\right),\left.\begin{array}{rcl}G&=&2m
(32m^2+9m+1),\\F&=&128m^3-20m-5,\;\;m\geq 4\\F&=&112m^3+48m^2+8m-1,\;\;m\leq 3.
\end{array}\right.\nonumber
\eea

{\bf 4}. $n=4m+3$, $a_{33}=a_{31}=4m+3$, $a_{32}=4$.   
\bea
\left(\begin{array}{ccc}5m+4\;&\;-4m\;&\;-(m+1)\\-(m+1)\;&\;4(m+1)\;&\;-(3m+2)\\
-(4m+3)\;&\;-4\;&\;4m+3\end{array}\right),\left.\begin{array}{rcl}
G&=&2(32Nm^3+57m^2+33m+6),\\\\F&=&128m^3+2242+124m+19,\;\;m\geq 1.\end{array}
\right.\nonumber
\eea
The above formulas for $F(R_n^2)$ coincide with those obtained in \cite{le15} 
when $b_{22}>b_{11}$. In the opposite case ($b_{22}<b_{11}$) we arrive at the 
other formulas, e.g., as in the case $n=4m+1$, $m\leq 3$. There exist 3 
exceptional symmetric semigroups $R_3^2$, $R_4^2$, $R_5^2$, with minimal 
relations which do not obey the matrix Reps presented above.
%%%%%%%%%%%%%%%%%%%%%%%%%%%%%%%%%%%%%%%%%%%%%%%%%%%%%%%
\section{Numerical semigroups $R_n^3$, $n\geq 4$}\label{l3}
%%%%%%%%%%%%%%%%%%%%%%%%%%%%%%%%%%%%%%%%%%%%%%%%%%%%%%%
Write the third relation in (\ref{x9}) for $R_n^3$,
\bea
a_{32}n^3=(a_{33}-a_{31})\left(n^3+3n\right)+(a_{33}+a_{31})(3n^2+1).\label{b1}
\eea
Choose the Rep $a_{31}=(pn+q)$, $a_{33}=(pn-q)$, $p,q\in{\mathbb Q}$, and 
insert it into (\ref{b1}),
\bea
a_{32}n^2=2p(3n^2+1)-2q(n^2+3),\quad\mbox{or}\quad 
a_{32}=6p-2q+2(p-3q)/n^2.\nonumber
\eea
To eliminate the dependence of $a_{32}$ on $n^{-2}$ in the last relation we put
\bea
p=3q,\quad a_{31}=q(3n+1),\quad a_{32}=16q,\quad a_{33}=q(3n-1).\nonumber
\eea
To satisfy $\gcd(a_{31},a_{32},a_{33})=1$ in (\ref{x9}), we have to 
distinguish two different cases: $q=1$ if $n=2N$ and $q=1/2$ if $n=2N+1$.
%%%%%%%%%%%%%%%%%%%%%%%%%%%%%%%%%%%%%%%%%%%%%%%%%%%%%%%
\subsection{Numerical semigroups $R_n^3$, $\;n=0\;(\bmod\;2)$}\label{l31}
%%%%%%%%%%%%%%%%%%%%%%%%%%%%%%%%%%%%%%%%%%%%%%%%%%%%%%%
The matrix of minimal relations ${\cal A}_3$ reads,
\bea
\left(\begin{array}{ccc}a_{21}+6N+1\;&\;16-a_{22}\;&\;a_{23}-(6N-1)\\-a_{21}
\;&\;a_{22}\;&\;-a_{23}\\-(6N+1)\;&\;-16\;&\;(6N-1)\end{array}\right),\!\!\left.\begin{array}{rcl}
(2N-1)^3\!\!&\!=\!&\!\!a_{22}(6N-1)-16a_{23},\\ \\(2N+1)^3\!\!&\!=\!&\!\!a_{22}
(6N+1)+16a_{21}.\end{array}\right.\quad\label{b2}
\eea
Two Eqs. (\ref{b2}) need that at least two of $a_{2j}$ be polynomials in 
$N$ 
of the 2nd degree, e.g., $a_{21}$ and $a_{22}$ are quadratic polynomials. To 
balance the cubic degrees in (\ref{b2}) choose $a_{2j}$ as polynomials on 
residue class of $N$ modulo $t_3$ which will be found later, i.e., $N=t_3m+j$,
$0\leq j<t_3$,
\bea
a_{21}=r_2m^2+r_1m+r_0,\quad a_{22}=k_2m^2+k_1m+k_0,\quad a_{23}=l_1m+l_0.
\label{b3}
\eea
Substitute (\ref{b3}) into (\ref{b2}) and obtain
\bea
&&k_2=r_2=\frac{4t_3^2}{3},\quad k_1=\frac{8t_3}{9}(3j-2),\quad 
3t_3k_0+4(r_1-l_1)=\frac{t_3}{3}\left(9+16j+12j^2\right),\nonumber\\
&&(6j+1)k_0+16r_0=(2j+1)^3,\quad (6j-1)k_0-16l_0=(2j-1)^3,\quad
\frac{r_1+l_1}{1+12j}=\frac{2t_3}{9}.\nonumber
\eea
The minimal value of $t_3$, providing $k_1\in{\mathbb Z}_+$ in the above 
equalities be integer, is $t_3=9$, that leads to $k_1=8(3j-2)$ and $k_2=r_2=
108$. The other five parameters, $r_0(j),r_1(j),k_0(j),l_0(j)$ and $l_1(j)$ may 
be found if we find numerically matrix ${\cal A}_3$ for nine first semigroups 
$R_{18+2j}^3$, $0\leq j\leq 8$.

{\bf 1}. $n=18m$, $m\geq 1$,
\bea
\left(\begin{array}{ccc}108m^2+55m+1\;&\;-(108m^2-16m-15)\;&\;-(53m-1)\\
-m(108m+1)\;&\;108m^2-16m+1\;&\;-m\\-(54m+1)\;&\;-16\;&\;54m-1\end{array}
\right),\nonumber
\eea
\bea
&&G=314928m^5+110808m^4+16632m^3+532m^2-62m,\nonumber\\
&&F=629856 m^5+215784m^4+34020m^3+1890m^2-109m-1,\quad m\leq 15,\nonumber\\
&&F=629856 m^5+221616m^4-58320m^3+1944m^2-108m-1,\quad m\geq 16.\nonumber
\eea

{\bf 2}. $n=18m+2$, $m\geq 1$; $\qquad n\neq 2$
\bea
\left(\begin{array}{ccc}108m^2+37m+3\;&\;-(108m^2+8m-3)\;&\;-(11m+1)\\
-(108m^2-17m-4)\;&\;108m^2+8m+13\;&\;-(43m+4)\\-(54m+7)\;&\;-16\;&\;54m+5
\end{array}\right),\nonumber
\eea
\bea
&&G=314928m^5+285768m^4+104760 m^3+15244m^2+786m+6,\nonumber\\
&&F=629856m^5+571536m^4+190512m^3+31752m^2+2548m+75.\nonumber
\eea

{\bf 3}. $n=18m+4$, $m\geq 1$; $\qquad n\neq 4$.
\bea
\left(\begin{array}{ccc}108m^2+55m+7\;&\;-(108m^2+32m+1)\;&\;-(5m+1)\\
-(108m^2+m-6)\;&\;108m^2+32m+17\;&\;-(49m+10)\\-(54m+13)\;&\;-16\;&\;54m+11
\end{array}\right),\nonumber
\eea
\bea
&&G=314928m^5+460728m^4+270648m^3+77476m^2+10674 m+564,\nonumber\\
&&F=629856 m^5+921456m^4+532656m^3+153144m^2+21812m+1223.\nonumber
\eea

{\bf 4}. $n=18m+6$, $m\geq 1$; $\qquad n\neq 6$.
\bea
\left(\begin{array}{ccc}108m^2+109m+25\;&\;-(108m^2+56m-3)\;&\;-(35m+11)\\
-(108m^2+55m+6)\;&\;108m^2+56m+13\;&\;-(19m+6)\\-(54m+19)\;&\;-16\;&\;54m+17
\end{array}
\right),\nonumber
\eea
\bea
&&G=314928m^5+635688m^4+514296m^3+202780m^2+38434m+2778,\nonumber\\
&&F=629856m^5+1271376m^4+968112m^3+355752m^2+63828m+4499.\nonumber 
\eea

{\bf 5}. $n=18m+8$, $m\geq 0$,
\bea
\left(\begin{array}{ccc}108m^2+145m+44\;&\;-(108m^2+80m+1)\;&\;-(47m+20)\\
-(108m^2+91m+19)\;&\;108m^2+80m+17\;&\;-(7m+3)\\-(54m+25)\;&\;-16\;&\;54m+23
\end{array}\right),\nonumber
\eea
\bea
&&G=314928m^5+810648m^4+835704m^3+428308m^2+108658m+10888,\nonumber\\
&&F_{m=0,1}=629856m^5+1580472m^4+1604772m^3+821178m^2+210979m+21700,\nonumber\\
&&F_{m\geq 2}=629856m^5+1621296 m^4+1590192 m^3+753624 m^2+173908m+15695.
\nonumber
\eea

{\bf 6}. $n=18m+10$, $m\geq 0$,
\bea
\left(\begin{array}{ccc}108m^2+163m+58\;&\;-(108m^2+104m+13)\;&\;-(41m+22)\\
-(108m^2+109m+27)\;&\;108m^2+104m+29\;&\;-(13m+7)\\-(54m+31)\;&\;-16\;&\;54m+29
\end{array}\right),\nonumber
\eea
\bea
&&G=314928m^5+985608m^4+1234872m^3+769612m^2+237666m+29022,\nonumber\\
&&F=55222,\quad m=0,\nonumber\\
&&F_{m\geq 1}=629856m^5+1971216 m^4+2398896 m^3+1429704 m^2+419252m+48539.
\nonumber
\eea

{\bf 7}. $n=18m+12$, $m\geq 0$,
\bea
\left(\begin{array}{ccc}108m^2+163m+61\;&\;-(108m^2+128m+33)\;&\;-(17m+11)\\
-(108m^2+109m+24)\;&\;108m^2+128m+49\;&\;-(37m+24)\\-(54m+37)\;&\;-16\;&\;54m+35
\end{array}\right),\nonumber
\eea
\bea
&&G=314928m^5+1160568 m^4+1711800m^3+1257796 m^2+459058m+66444,\nonumber\\
&&F=629856m^5+2321136 m^4+3394224 m^3+2466936 m^2+892404m+128663.\nonumber
\eea

{\bf 8}. $n=18m+14$, $m\geq 0$,
\bea
\left(\begin{array}{ccc}108m^2+199m+90\;&\;-(108m^2+152m+45)\;&\;-(29m+22)\\
-(108m^2+145m+47)\;&\;108m^2+152m+61\;&\;-(25m+19)\\-(54m+43)\;&\;-16\;&\;54m+41
\end{array}\right),\nonumber
\eea
\bea
&&G=314928m^5+1335528 m^4+2266488 m^3+1917916 m^2+807378 m+135042,\nonumber\\
&&F=629856m^5+2671056 m^4+4482864 m^3+3730536m^2+1541908m+253539.\nonumber
\eea

{\bf 9}. $n=18m+16$, $m\geq 0$,
\bea
\left(\begin{array}{ccc}108m^2+217m+108\;&\;-(108m^2+176m+65)\;&\;-(23m+20)\\
-(108m^2+163m+59)\;&\;108m^2+176m+81\;&\;-(31m+27)\\-(54m+49)\;&\;-16\;&\;54m+47
\end{array}\right),\nonumber
\eea
\bea
&&G=314928m^5+1510488 m^4+2898936 m^3+2776756 m^2+1325266 m+251824,\nonumber\\
&&F=629856m^5+3020976 m^4+5758128m^3+5458968m^2+2576660m+484767.\nonumber
\eea
%%%%%%%%%%%%%%%%%%%%%%%%%%%%%%%%%%%%%%%%%%%%%%%%%%%%%%%
\subsection{Numerical semigroups $R_n^3$, $\;n=1\;(\bmod\;2)$}\label{l32}
%%%%%%%%%%%%%%%%%%%%%%%%%%%%%%%%%%%%%%%%%%%%%%%%%%%%%%%

The matrix of minimal relations ${\cal A}_3$ reads,
\bea
\left(\begin{array}{ccc}a_{21}+(3N+2)\;&\;8-a_{22}\;&\;a_{23}-(3N+1)\\
-a_{21}\;&\;a_{22}\;&\;-a_{23}\\-(3N+2)\;&\;-8\;&\;3N+1\end{array}\right),
\left.\begin{array}{rcl}(2N)^3\!&\!=\!&\!a_{22}(3N+1)-8a_{23},\\ \\(2N+2)^3\!&\!
=\!&\!a_{22}(3N+2)+8a_{21}.\end{array}\right.\nonumber
\eea
We skip intermediate calculations repeating the procedure performed in section 
\ref{l31}.

{\bf 1}. $n=18m+1$, $m\geq 0$,
\bea
\left(\begin{array}{ccc}216m^2+29m+1\;&\;-(216m^2-8m)\;&\;-m\\
-(216m^2+2m-1)\;&\;216m^2-8m+8\;&\;-(26m+1)\\-(27m+2)\;&\;-8\;&\;27m+1
\end{array}\right),\nonumber
\eea
\bea
&&G=629856m^5+160380 m^4+23220 m^3+2330 m^2+73 m,\nonumber\\
&&F=1259712m^5+320760m^4+44712m^3+4644m^2+154m-1.\nonumber
\eea

{\bf 2}. $n=18m+3$, $m\geq 1$; $\qquad n\neq 3$.
\bea
\left(\begin{array}{ccc}216m^2+83m+8\;&\;-(216m^2+40m)\;&\;-(7m+1)\\
-(216m^2+56m+3)\;&\;216m^2+40m+8\;&\;-(20m+3)\\-(27m+5)\;&\;-8\;&\;27m+4
\end{array}\right),\nonumber
\eea
\bea
&&G=629856m^5+510300 m^4+172260 m^3+30134 m^2+2657 m+91,\nonumber\\
&&F=181,\quad m=0,\nonumber\\
&&F=1259712m^5+1020600m^4+332424m^3+55404m^2+4698m+157,\quad m\geq 1.\nonumber
\eea

{\bf 3}. $n=18m+5$, $m\geq 0$,
\bea
\left(\begin{array}{ccc}216m^2+128m+19\;&\;-(216m^2+88m+8)\;&\;-(4m+1)\\
-(216m^2+101m+11)\;&\;216m^2+88m+16\;&\;-(23m+6)\\-(27m+8)\;&\;-8\;&\;27m+7
\end{array}\right),\nonumber
\eea
\bea
&&G=629856m^5+860220 m^4+476820 m^3+134186 m^2+18993 m+1066,\nonumber\\
&&F=1259712m^5+1720440m^4+946728m^3+263412m^2+37082m+2107.\nonumber
\eea

{\bf 4}. $n=18m+7$, $m\geq 0$,
\bea
\left(\begin{array}{ccc}216m^2+191m+42\;&\;-(216m^2+136m+16)\;&\;-(19m+7)\\
-(216m^2+164m+31)\;&\;216m^2+136m+24\;&\;-(8m+3)\\-(27m+11)\;&\;-8\;&\;27m+10
\end{array}\right),\nonumber
\eea
\bea
&&G=629856m^5+1210140 m^4+936900 m^3+365246 m^2+71609 m+5637,\nonumber\\
&&F=10745,\quad m=0,\nonumber\\
&&F_{m\geq 1}=1259712m^5+2420280m^4+1840968m^3+693468m^2+129322m+9537.\nonumber
\eea

{\bf 5}. $n=18m+9$, $m\geq 0$,
\bea
\left(\begin{array}{ccc}216m^2+245m+69\;&\;-(216m^2+184m+32)\;&\;-(25m+12)\\
-(216m^2+218m+55)\;&\;216m^2+184m+40\;&\;-(2m+1)\\-(27m+14)\;&\;-8\;&\;27m+13
\end{array}\right),\nonumber
\eea
\bea
&&G=629856m^5+1560060 m^4+1552500 m^3+776234 m^2+195001 m+19684,\nonumber\\
&&F_{m\leq 3}=1259712m^5+3108456m^4+3083184m^3+1537596m^2+385666m+38919,
\nonumber\\
&&F_{m\geq 4}=1259712m^5+3120120m^4+3061800m^3+1488132m^2+358074m+34087.
\nonumber
\eea

{\bf 6}. $n=18m+11$, $m\geq 0$,
\bea
\left(\begin{array}{ccc}216m^2+290m+97\;&\;-(216m^2+232m+56)\;&\;-(22m+13)\\
-(216m^2+263m+80)\;&\;216m^2+232m+64\;&\;-(5m+3)\\-(27m+17)\;&\;-8\;&\;27m+16
\end{array}\right),\nonumber
\eea
\bea
&&G=629856m^5+1909980 m^4+2323620 m^3+1417694 m^2+433745 m+53223,\nonumber\\
&&F=103589,\quad m=0,\nonumber\\
&&F_{m\geq 1}=1259712m^5+3819960m^4+4609224m^3+2766636m^2+826058m+98125.
\nonumber
\eea

{\bf 7}. $n=18m+13$, $m\geq 0$,
\bea
\left(\begin{array}{ccc}216m^2+326m+123\;&\;-(216m^2+280m+90)\;&\;-(10m+7)\\
-(216m^2+299m+103)\;&\;216m^2+280m+96\;&\;-(17m+12)\\-(27m+20)\;&\;-8\;&\;27m+19
\end{array}\right),\nonumber
\eea
\bea
&&G=629856m^5+2259900 m^4+3250260 m^3+2342114 m^2+845465m+122286,
\nonumber\\
&&F=1259712m^5+3120120m^4+3061800m^3+1488132m^2+358074m+34087.\nonumber
\eea

{\bf 8}. $n=18m+15$, $m\geq 0$,
\bea
\left(\begin{array}{ccc}216m^2+380m+167\;&\;-(216m^2+328m+120)\;&\;-(16m+13)\\
-(216m^2+353m+144)\;&\;216m^2+328m+128\;&\;-(11m+9)\\-(27m+23)\;&\;-8\;&\;27m+22
\end{array}\right),\nonumber
\eea
\bea
&&G=629856m^5+2609820 m^4+4332420 m^3+3601550 m^2+1499153 m+249937,\nonumber\\
&&F=1259712m^5+5219640 m^4+8637192 m^3+7135452 m^2+2943162 m+484897.
\nonumber
\eea

{\bf 9}. $n=18m+17$, $m\geq 0$,
\bea
\left(\begin{array}{ccc}216m^2+425m+209\;&\;-(216m^2+376m+160)\;&\;-(13m+12)\\
-(216m^2+398m+183)\;&\;216m^2+376m+168\;&\;-(14m+13)\\-(27m+26)\;&\;-8\;&\;
27m+25\end{array}\right),\nonumber
\eea
\bea
&&G=629856m^5+2959740 m^4+5570100 m^3+5247626 m^2+2474713 m+467304,\nonumber\\
&&F=1259712m^5+5919480 m^4+11117736 m^3+10433124 m^2+4892186 m+917039.\nonumber
\eea
Formulas for $F(R_n^3)$ in this section coincide with those obtained in 
\cite{le15} if $b_{22}>b_{11}$.
%%%%%%%%%%%%%%%%%%%%%%%%%%%%%%%%%%%%%%%%%%%%%%%%%%%%%%%
\subsection{Exceptional semigroups $R_n^3$, $n=4,6$}\label{l33}
%%%%%%%%%%%%%%%%%%%%%%%%%%%%%%%%%%%%%%%%%%%%%%%%%%%%%%%
There exist 3 exceptional semigroups (1 symmetric $R_3^3\equiv\la 2^3,3^3\ra$ 
and 2 nonsymmetric $R_4^3$, $R_6^3$) with minimal relations which do not obey 
the matrix Reps presented in section \ref{l31}, \ref{l32}. Below we give Reps 
of two nonsymmetric semigroups.
\bea
R_4^3:\left(\begin{array}{rrr}7&-1&-1\\-1&18&-9\\-6&-17&10\end{array}\right),
\quad\left.\begin{array}{l}G=558\\F=1098\end{array}\right.,\qquad
R_6^3:\left(\begin{array}{rrr}31&-10&-5\\-6&13&-6\\-25&-3&11\end{array}\right),
\quad\left.\begin{array}{l}G=2670\\F=5249\end{array}\right..\nonumber
\eea
%%%%%%%%%%%%%%%%%%%%%%%%%%%%%%%%%%%%%%%%%%%%%%%%%%%%%%%%%%
\section{Numerical semigroups $R_n^4$, $n\geq 4$}\label{l4}
%%%%%%%%%%%%%%%%%%%%%%%%%%%%%%%%%%%%%%%%%%%%%%%%%%%%%%%
These semigroups were not studied in \cite{le15}, however using a weak 
argumentation its authors predict that $F(R_n^4)$ is given by polynomial 
expressions in $n$ on residue class of $n$ modulo 88 {\em 'whereas experimental 
tests make us believe that we need 40 formulas'} \cite{le15}.

Assuming that the matrix of minimal relations comprise the polynomial 
expressions in $n$ on residue class of $n$ modulo 40, we have calculated 
numerically these matrices in two different cases $n=0,1\;(\bmod\;2)$. In 
section \ref{l44} we give also expressions for genus of semigroups $R_{20m+9}^4$
and $R_{20m-9}^4$ which illustrate the forthcoming Theorem \ref{the2}.
%%%%%%%%%%%%%%%%%%%%%%%%%%%%%%%%%%%%%%%%%%%%%%%%%%%%%%%
\subsection{Numerical semigroups $R_n^4$, $\;n=0\;(\bmod\;2)$}\label{l41}
%%%%%%%%%%%%%%%%%%%%%%%%%%%%%%%%%%%%%%%%%%%%%%%%%%%%%%%
{\bf 1}. $n=40m$, $m\geq 4$; $\qquad n\neq 40,80,120$.
\bea
\left(\begin{array}{rrr}8160m^2+161m+1\;&\;-(320m^2-1280m+1)\;&\;
-(7840m^2-159m+1)\\-(160m^2+m)&320m^2+1&-(160m^2-m)\\-(8000m^2+160m+1)&-1280m&8000m^2-160m+1
\end{array}\right)\nonumber
\eea
{\bf 2}. $n=40m+2$, $m\geq 1$; $\qquad n\neq 2$.
\bea
\left(\begin{array}{rrr}4640m^2+526m+15\;&\;-(4480m^2+64m+6)\;&\;
-(160m^2-18m-1)\\-(4400m^2+495m+14)\;&\;4800m^2+160m+11\;&\;-(400m^2+65m+2)\\
-(240m^2+31m+1)&-(320m^2+96m+5)&560m^2+47m+1\end{array}\right)\nonumber
\eea
{\bf 3}. $n=40m+4$, $m\geq 0$.
\bea
\left(\begin{array}{rrr}2880m^2+616m+33\;&\;-8(320m^2+32m+1)\;&\;
-(320m^2+40m+1)\\-(2240m^2+473m+25)\;&\;2880m^2+448m+25 \;&\;-(640m^2+135m+7)\\
-(640m^2+143m+8)\;&\;-(320m^2+192m+17)\;&\;960m^2+175m+8\end{array}\right)
\nonumber
\eea
{\bf 4}. $n=40m+6$, $m\geq 1$; $\qquad n\neq 6$.
\bea
\left(\begin{array}{rrr}2800m^2+885m+70\;&\;-(1600m^2+160m-7)\;&\;
-(1200m^2+325m+22)\\-(1760m^2+550m+43)\;&\;1920m^2+448m+30\;&\;-(160m^2+58m+5)\\
-(1040m^2+335m+27)\;&\;-(320m^2+288m+37)\;&\;1360m^2+383m+27\end{array}\right)
\nonumber
\eea
{\bf 5}. $n=40m+8$, $m\geq 0$.
\bea
\left(\begin{array}{rrr}2240m^2+932m+97\;&\;-4(320m^2+64m+1)\;&\;
-(960m^2+356m+33)\\-(800m^2+325m+33)\;&\;1600m^2+640m+69\;&\;-(800m^2+315m+31)
\\-(1440m^2+607m+64)\;&\;-(320m^2+384m+65)\;&\;1760m^2+671m+64\end{array}
\right)\nonumber
\eea
{\bf 6}. $n=40m+10$, $m\geq 1$; $\qquad n\neq 10$.
\bea
\left(\begin{array}{rrr}2480m^2+1283m+166\;&\;-(960m^2+160m-17)\;&\;
-(1520m^2+723m+86)\\-(640m^2+324m+41)\;&\;1280m^2+640m+84\;&\;-(640m^2+316m+39)
\\-(1840m^2+959m+125)\;&\;-(320m^2+480m+101)\;&\;2160m^2+1039m+125\end{array}
\right)\nonumber
\eea
{\bf 7}. $n=40m+12$, $m\geq 0$.
\bea
\left(\begin{array}{rrr}1280m^2+787m+121\;&\;-(960m^2+448m+51)\;&\;
-(320m^2+179m+25)\\-(320m^2+183m+26)\;&\;2240m^2+1472m+247\;&\;
-(1920m^2+1129m+166)\\-(960m^2+604m+95)\;&\;-(1280m^2+1024m+196)\;&\;
2240m^2+1308m+191\end{array}\right)\nonumber
\eea
{\bf 8}. $n=40m+14$, $m\geq 1$; $\qquad n\neq 14$.
\bea
\left(\begin{array}{rrr}2720m^2+1954m+351\;&\;-(640m^2+64m-54)\;&\;
-(2080m^2+1410m+239)\\-(80m^2+51m+8)\;&\;960m^2+736m+143\;&\;-(880m^2+605m+104)
\\-(2640m^2+1903m+343)\;&\;-(320m^2+672m+197)\;&\;2960m^2+2015m+343\end{array}
\right)\nonumber
\eea
{\bf 9}. $n=40m+16$, $m\geq 0$.
\bea
\left(\begin{array}{rrr}1920m^2+1570m+321\;&\;-(640m^2+256m+2)\;&\;
-(1280m^2+994m+193)\\-(800m^2+645m+130)\;&\;1600m^2+1280m+261\;&\;
-(800m^2+635m+126)\\-(1120m^2+925m+191)\;&\;-(960m^2+1024m+259)\;&\;
2080m^2+1629m+319\end{array}\right)\nonumber
\eea
{\bf 10}. $n=40m+18$, $m\geq 0$.
\bea
\left(\begin{array}{rrr}1120m^2+1026m+235\;&\;-(640m^2+448m+74)\;&\;
-(480m^2+418m+91)\\-(1040m^2+937m+211)\;&\;2880m^2+2656m+621\;&\;
-(1840m^2+1639m+365)\\-(80m^2+89m+24)\;&\;-(2240m^2+2208m+547)\;&\;
2320m^2+2057m+456\end{array}\right)\nonumber
\eea
{\bf 11}. $n=40m+20$, $m\geq 2$; $\qquad n\neq 60$.
\bea
\left(\begin{array}{rrr}4160m^2+4241m+1081\;&\;-(320m^2-320m-239)\;&\;
-(3840m^2+3761m+921)\\-(320m^2+322m+81)\;&\;640m^2+640m+162\;&\;
-(320m^2+318m+79)\\-(3840m^2+3919m+1000)\;&\;-(320m^2+960m+401)\;&\;
4160m^2+4079m+1000\end{array}\right)\nonumber
\eea
{\bf 12}. $n=40m-18$, $m\geq 1$.
\bea
\left(\begin{array}{rrr}2320m^2-2057m+456\;&\;-(2240m^2-2208m+547)\;&\;
-(80m^2-89m+24)\\-(1840m^2-1639m+365)\;&\;2880m^2-2656m+621\;&\;
-(1040m^2-937m+211)\\-(480m^2-418m+91)\;&\;-(640m^2-448m+74)\;&\;
1120m^2-1026m+235\end{array}\right)\nonumber
\eea
{\bf 13}. $n=40m-16$, $m\geq 1$.
\bea
\left(\begin{array}{rrr}2080m^2-1629m+319\;&\;-(960m^2-1024m+259)\;&\;
-(1120m^2-925m+191)\\-(800m^2-635m+126)\;&\;1600m^2-1280m+261\;&\;
-(800m^2-645m+130)\\-(1280m^2-994m+193)\;&\;-(640m^2-256m+2)\;&\;
1920m^2-1570m+321\end{array}\right)\nonumber
\eea
{\bf 14}. $n=40m-14$, $m\geq 2$; $\qquad n\neq 26$.
\bea
\left(\begin{array}{rrr}2960m^2-2015m+343\;&\;-(320m^2-672m+197)\;&\;
-(2640m^2-1903m+343)\\-(880m^2-605m+104)\;&\;960m^2-736m+143\;&\;-(80m^2-51m+8)
\\-(2080m^2-1410m+239)\;&\;-(640m^2-64m-54)\;&\;2720m^2-1954m+351\end{array}
\right)\nonumber
\eea
{\bf 15}. $n=40m-12$, $m\geq 1$.
\bea
\left(\begin{array}{rrr}2240m^2-1308m+191\;&\;-4(320m^2-256m+49)\;&\;
-(960m^2-604m+95)\\-(1920m^2-1129m+166)\;&\;2240m^2-1472m+247\;&\;
-(320m^2-183m+26)\\-(320m^2-179m+25)\;&\;-(960m^2-448m+51)\;&\;1280m^2-787m+121
\end{array}\right)\nonumber
\eea
{\bf 16}. $n=40m-10$, $m\geq 2$; $\qquad n\neq 30$.
\bea
\left(\begin{array}{rrr}2160m^2-1039m+125\;&\;-(320m^2-480m+101)\;&\;
-(1840m^2-959m+125)\\-(640m^2-316m+39)\;&\;1280m^2-640m+84\;&\;-(640m^2-324m+41)
\\-(1520m^2-723m+86)\;&\;-(960m^2-160m-17)\;&\;2480m^2-1283m+166\end{array}
\right)\nonumber
\eea
{\bf 18}. $n=40m-8$, $m\geq 1$.
\bea
\left(\begin{array}{rrr}1760m^2-671m+64\;&\;-(320m^2-384m+65)\;&\;
-(1440m^2-607m+64)\\-(800m^2-315m+31)\;&\;1600m^2-640m+69\;&\;-(800m^2-325m+33)
\\-(960m^2-356m+33)\;&\;-(1280m^2-256m+4)\;&\;2240m^2-932m+97\end{array}
\right)\nonumber
\eea
{\bf 19}. $n=40m-6$, $m\geq 1$.
\bea
\left(\begin{array}{rrr}1360m^2-383m+27\;&\;-(320m^2-288m+37)\;&\;
-(1040m^2-335m+27)\\-(160m^2-58m+5)\;&\;1920m^2-448m+30\;&\;-(1760m^2-550m+43)
\\-(1200m^2-325m+22)\;&\;-(1600m^2-160m-7)\;&\;2800m^2-885m+70\end{array}
\right)\nonumber
\eea
{\bf 20}. $n=40m-4$, $m\geq 1$.
\bea
\left(\begin{array}{rrr}960m^2-175m+8\;&\;-(320m^2-192m+17)\;&\;-(640m^2-143m+8)
\\-(640m^2-135m+7)\;&\;2880m^2-448m+25\;&\;-(2240m^2-473m+25)\\-(320m^2-40m+1)
\;&\;-(2560m^2-256m+8)\;&\;2880m^2-616m+33\end{array}\right)\nonumber
\eea
{\bf 21}. $n=40m-2$, $m\geq 1$.
\bea
\left(\begin{array}{rrr}560m^2-47m+1\;&\;-(320m^2-96m+5)\;&\;-(240m^2-31m+1)\\
-(400m^2-65m+2)\;&\;4800m^2-160m+11\;&\;-(4400m^2-495m+14)\\-(160m^2+18m-1)\;&\;
-(4480m^2-64m+6)\;&\;4640m^2-526m+15\end{array}\right).\nonumber
\eea
%%%%%%%%%%%%%%%%%%%%%%%%%%%%%%%%%%%%%%%%%%%%%%%%%%%%%%%
\subsection{Numerical semigroups $R_n^4$, $\;n=1\;(\bmod\;2)$}\label{l42}
%%%%%%%%%%%%%%%%%%%%%%%%%%%%%%%%%%%%%%%%%%%%%%%%%%%%%%%
{\bf 1}. $n=20m+1$, $m\geq 1$.
\bea
\left(\begin{array}{rrr}230m^2+30m+1\;&\;-32m(5m-1)\;&\;-2m(35m+1)\\
-10m(15m+1)\;&\;16(50m^2+10m+1)\;&\;-(650m^2+50m+1)\\-(80m^2+20m+1)\;&\;
-16(40m^2+12m+1)\;&\;720m^2+52m+1\end{array}\right)\nonumber
\eea
{\bf 2}. $n=20m+3$, $m\geq 1$; $\qquad n\neq 3$.
\bea
\left(\begin{array}{rrr}530m^2+178m+15\;&\;-16(10m^2-6m-1)\;&\;-(370m^2+94m+6)\\
-(60m^2+16m+1)\;&\;16(20m^2+8m+1)\;&\;-(260m^2+72m+5)\\-(470m^2+162m+14)\;&\;
-32(5m^2+7m+1)\;&\;630m^2+166m+11\end{array}\right)\nonumber
\eea
{\bf 3}. $n=20m+5$, $m\geq 1$; $\qquad n\neq 5$.
\bea
\left(\begin{array}{rrr}490m^2+258m+34\;&\;-16(30m^2+10m+1)\;&\;-(10m^2-2m-1)\\
-(320m^2+164m+21)\;&\;16(40m^2+20m+3)\;&\;-(320m^2+156m+19)\\-(170m^2+94m+13)
\;&\;-32(5m^2+5m+1)\;&\;330m^2+154m+18\end{array}\right)\nonumber
\eea
{\bf 4}. $n=20m+7$, $m\geq 2$; $\qquad n\neq 7,27$.
\bea
\left(\begin{array}{rrr}2(565m^2+417m+77)\;&\;-32(5m^2-7m-3)\;&\;
-(970m^2+638m+105)\\-(130m^2+94m+17)\;&\;16(10m^2+6m+1)\;&\;-(30m^2+22m+4)\\
-(1000m^2+740m+137)\;&\;-16(20m+7)\;&\;1000m^2+660m+109\end{array}\right)
\nonumber
\eea
{\bf 5}. $n=20m+9$, $m\geq 1$; $\qquad n\neq 9$.
\bea
\left(\begin{array}{rrr}430m^2+402m+94\;&\;-16(10m^2+2m-1)\;&\;-(270m^2+230m+49)
\\-(290m^2+266m+61)\;&\;32(15m^2+13m+3)\;&\;-(190m^2+170m+38)\\
-(140m^2+136m+33)\;&\;-16(20m^2+24m+7)\;&\;460m^2+400m+87\end{array}\right)
\nonumber
\eea
{\bf 6}. $n=20m-9$, $m\geq 1$.
\bea
\left(\begin{array}{rrr}460m^2-400m+87\;&\;-16(20m^2-24m+7)\;&\;
-(140m^2-136m+33)\\-(190m^2-170m+38)\;&\;32(15m^2-13m+3)\;&\;-(290m^2-266m+61)\\
-(270m^2-230m+49)\;&\;-16(10m^2-2m-1)\;&\;430m^2-402m+94\end{array}\right)
\nonumber
\eea
{\bf 7}. $n=20m-7$, $m\geq 3$; $\qquad n\neq 13,33$.
\bea
\left(\begin{array}{rrr}1030m^2-682m+113\;&\;-32(5m^2-13m+4)\;&\;
-(870m^2-646m+120)\\-(30m^2-22m+4)\;&\;16(10m^2-6m+1)\;&\;-(130m^2-94m+17)\\
-(1000 m^2-660m+109)\;&\;-16(20 m-7)\;&\;1000m^2-740m+137\end{array}\right)
\nonumber
\eea
{\bf 8}. $n=20m-5$, $m\geq 1$.
\bea
\left(\begin{array}{rrr}330m^2-154m+18\;&\;-32(5m^2-5m+1)\;&\;-(170m^2-94m+13)\\
-(320m^2-156m+19)\;&\;16(40m^2-20m+3)\;&\;-(320m^2-164m+21)\\-(10m^2+2m-1)\;&\;
-16(30m^2-10m+1)\;&\;490m^2-258m+34\end{array}\right)\nonumber
\eea
{\bf 9}. $n=20m-3$, $m\geq 2$; $\qquad n\neq 17$.
\bea
\left(\begin{array}{rrr}630m^2-166m+11\;&\;-32(5m^2-7m+1)\;&\;-(470m^2-162m+14)
\\-(260m^2-72m+5)\;&\;16(20m^2-8m+1)\;&\;-(60m^2-16m+1)\\-(370m^2-94m+6)\;&\;
-16(10m^2+6m-1)\;&\;530m^2-178m+15\end{array}\right)\nonumber
\eea
{\bf 10}. $n=20m-1$, $m\geq 1$.
\bea
\left(\begin{array}{rrr}720m^2-52m+1\;&\;-16(40m^2-12m+1)\;&\;-(80m^2-20m+1)\\
-(650m^2-50m+1)\;&\;16(50m^2-10m+1)\;&\;-10m(15m-1)\\-2m(35m-1)\;&\;-32m(5m+1)
\;&\;230m^2-30m+1\end{array}\right).
\nonumber
\eea
%%%%%%%%%%%%%%%%%%%%%%%%%%%%%%%%%%%%%%%%%%%%%%%%%%%%%%%
\subsection{Exceptional semigroups $R_n^4$}\label{l43}
%%%%%%%%%%%%%%%%%%%%%%%%%%%%%%%%%%%%%%%%%%%%%%%%%%%%%%%
There exist 18 exceptional semigroups $R_n^4$ (3 symmetric $R_3^4$, $R_5^4$,
$R_7^4$ and 15 nonsymmetric) with minimal relations which do not obey the 
matrix Reps presented in sections \ref{l41}, \ref{l42}. We give them below.
\bea
&&R_3^4\equiv \la 2^4,3^4\ra,\;\;\left.\begin{array}{l}G=560\\F=1199
\end{array}\right.,\hspace{2cm}
R_7^4:\left(\begin{array}{rrr}256\;&\;0\;&\;-81\\-17\;&\;16\;&\;-4\\-256
\;&\;0\;&\;81\end{array}\right),\;\;\left.\begin{array}{l}G=181200,\\F=362399
\end{array}\right.\nonumber\\
&&R_5^4:\left(\begin{array}{rrr}81\;&\;0\;&\;-16\\-34\;&\;16\;&\;-1\\
-81\;&\;0\;&\;16\end{array}\right),\;\;\left.\begin{array}{l}G=14280\\
F=28559\end{array}\right.,\;\;
R_6^4:\left(\begin{array}{rrr}113\;&\;-23\;&\;-17\\-43\;&\;30\;&\;-5\\
-70\;&\;-7\;&\;22\end{array}\right),\quad\left.\begin{array}{l}G=41713,\\
F=78308.\end{array}\right.\nonumber\\
&&R_9^4:\left(\begin{array}{rrr}155\;&\;-80\;&\;-11\\-61\;&\;96\;&\;-38\\
-94\;&\;-16\;&\;49\end{array}\right),\quad\left.\begin{array}{l}G=502480,\\
F=994223.\end{array}\right.\nonumber\\
&&R_{10}^4:\left(\begin{array}{rrr}207\;&\;-67\;&\;-47\\-41\;&\;84\;&\;-39\\
-166\;&\;-17\;&\;86\end{array}\right),\quad\left.\begin{array}{l}G=965342,\\
F=1897924.\end{array}\right.\nonumber\\
&&R_{13}^4:\left(\begin{array}{rrr}485\;&\;-32\;&\;-238\\-12\;&\;80\;&\;-53\\
-473\;&\;-48\;&\;291\end{array}\right),\quad\left.\begin{array}{l}G=6071192,\\
F=12005295.\end{array}\right.\nonumber\\
&&R_{14}^4:\left(\begin{array}{rrr}359\;&\;-89\;&\;-135\\-8\;&\;143\;&\;-104\\
-351\;&\;-54\;&\;239\end{array}\right),\quad\left.\begin{array}{l}G=7729559,\\
F=15400797.\end{array}\right.\nonumber\\
&&R_{17}^4:\left(\begin{array}{rrr}668\;&\;-176\;&\;-277\\-193\;&\;208\;&\;-45
\\-475\;&\;-32\;&\;322\end{array}\right),\quad\left.\begin{array}{l}G=24979344,
\\F=48247935.\end{array}\right.\nonumber\\
&&R_{20}^4:\left(\begin{array}{rrr}1243\;&\;-85\;&\;-763\\-81\;&\;162\;&\;-79\\
-1162\;&\;-77\;&\;842\end{array}\right),\quad\left.\begin{array}{l}G=90813516,
\\F=176868200.\end{array}\right.\nonumber\\
&&R_{26}^4:\left(\begin{array}{rrr}1667\;&\;-212\;&\;-1043\\-379\;&\;367\;&\;-37
\\-1288\;&\;-155\;&\;1080\end{array}\right),\quad\left.\begin{array}{l}
G=365363593,\\F=720624113.\end{array}\right.\nonumber\\
&&R_{27}^4:\left(\begin{array}{rrr}2359\;&\;-112\;&\;-1657\\-241\;&\;272\;&\;
-561\\-2118\;&\;-160\;&\;1713\end{array}\right),\quad\left.\begin{array}{l}
G=647256024,\\F=1230618127.\end{array}\right.\nonumber
\eea
\bea
&&R_{30}^4:\left(\begin{array}{rrr}1609\;&\;-665\;&\;-649\\-363\;&\;724\;&\;-357
\\-1246\;&\;-59\;&\;1006\end{array}\right),\quad\left.\begin{array}{l}
G=739585479,\\F=1465271324.\end{array}\right.\nonumber\\
&&R_{33}^4:\left(\begin{array}{rrr}2949\;&\;-400\;&\;-1959\\-80\;&\;464\;&\;-349
\\-2869\;&\;-64\;&\;2308\end{array}\right),\quad\left.\begin{array}{l}
G=1782545568,\\F=3555061055.\end{array}\right.\nonumber\\
&&R_{40}^4:\left(\begin{array}{rrr}8805\;&\;-4\;&\;-7205\\-161\;&\;321\;&\;-159
\\-8644\;&\;-317\;&\;7364\end{array}\right),\quad\left.\begin{array}{l}
G=10589583194,\\F=21173668803.\end{array}\right.\nonumber\\
&&R_{60}^4:\left(\begin{array}{rrr}10205\;&\;-1203\;&\;-7805\\-723\;&\;1442\;&\;
-717\\-9482\;&\;-239\;&\;8522\end{array}\right),\quad\left.\begin{array}{l}
G=67447447193,\\F=133546213800.\end{array}\right.\nonumber\\
&&R_{80}^4:\left(\begin{array}{rrr}33605\;&\;-2\;&\;-30405\\-642\;&\;1281\;&\; 
-638\\-32963\;&\;-1279\;&\;31043\end{array}\right),\quad\left.\begin{array}{l}
G=680612207996,\\F=1361182355203.\end{array}\right.\nonumber\\
&&R_{120}^4:\left(\begin{array}{rrr}75367\;&\;-1922\;&\;-68647\\-1443\;&\;2881
\;&\;-1437\\-73924\;&\;-959\;&\;70084\end{array}\right),\quad\left.\begin{array}
{l}G=7758023870871,\\F=15421051483202.\end{array}\right.\nonumber
\eea
%%%%%%%%%%%%%%%%%%%%%%%%%%%%%%%%%%%%%%%%%%%%%%%%%%%%%%%
\subsection{Duality of semigroups $R_{T_4m-k}^4$ and $R_{T_4m+k}^4$, 
$T_4=40$}\label{l44}
%%%%%%%%%%%%%%%%%%%%%%%%%%%%%%%%%%%%%%%%%%%%%%%%%%%%%%%
A careful observation of matrices of minimal relations for semigroups $R_{T_4m
\pm k}^4$, \\$T_4=40$, and their genera allows to prove two statements.
%%%%%%%%%%%%%%%%%%%%%%%%%%%%%%%%%%%%%%%%%%%%%%%%%%%%%%%
\begin{theorem}\label{the1}
Let two semigroups be given by their minimal relations (\ref{x9}),
\bea
R_{T_4m\pm k}^4:\;\left(\begin{array}{rrr}E_{11}^{\pm}(m)\;&\;-E_{12}^{\pm}(m)
\;&\;-E_{13}^{\pm}(m)\\-E_{21}^{\pm}(m)\;&\;E_{22}^{\pm}(m)\;&\;-E_{23}^{\pm}(m)
\\-E_{31}^{\pm}(m)\;&\;-E_{32}^{\pm}(m)\;&\;E_{33}^{\pm}(m)\end{array}\right),
\quad k\leq \frac{T_4}{2},\label{j1}
\eea
where $E_{ij}^-(m)$ and $E_{ij}^+(m)$ are given by polynomials
\bea
E_{ij}^-(m)=A_{ij}^-m^2-B_{ij}^-m+C_{ij}^-,\quad
E_{ij}^+(m)=A_{ij}^+m^2+B_{ij}^+m+C_{ij}^+.\label{j2}
\eea
Then the following duality relations hold
\bea 
&&A_{11}^+=A_{33}^-,\quad B_{11}^+=B_{33}^-,\quad C_{11}^+=C_{33}^-,\quad 
A_{22}^+=A_{22}^-,\quad B_{22}^+=B_{22}^-,\quad C_{22}^+=C_{22}^-,\nonumber\\
&&A_{33}^+=A_{11}^-,\quad B_{33}^+=B_{11}^-,\quad C_{33}^+=C_{11}^-,\quad 
A_{12}^+=A_{32}^-,\quad B_{12}^+=B_{32}^-,\quad C_{12}^+=C_{32}^-,\nonumber\\
&&A_{21}^+=A_{23}^-,\quad B_{21}^+=B_{23}^-,\quad C_{21}^+=C_{23}^-,\quad 
A_{31}^+=A_{13}^-,\quad B_{31}^+=B_{13}^-,\quad C_{31}^+=C_{13}^-,\nonumber\\
&&A_{13}^+=A_{31}^-,\quad B_{13}^+=B_{31}^-,\quad C_{13}^+=C_{31}^-,\quad 
A_{23}^+=A_{21}^-,\quad B_{23}^+=B_{21}^-,\quad C_{23}^+=C_{21}^-,\nonumber\\
&&A_{32}^+=A_{12}^-,\quad B_{32}^+=B_{12}^-,\quad 
C_{32}^+=C_{12}^-.\nonumber  
\eea
\end{theorem}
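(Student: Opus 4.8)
The plan is to establish the duality by exhibiting an explicit involution on the combinatorial data attached to the two semigroups $R_{T_4m+k}^4$ and $R_{T_4m-k}^4$, and then invoking uniqueness of the matrix of minimal relations ${\cal A}_3$ for a nonsymmetric $3$-generated semigroup. First I would observe the elementary identity of generators: setting $N=T_4 m$, the generator triple of $R_{N+k}^4$ is $\bigl((N+k-1)^4,(N+k)^4,(N+k+1)^4\bigr)$, while that of $R_{N-k}^4$ is $\bigl((N-k-1)^4,(N-k)^4,(N-k+1)^4\bigr)$. Under the substitution $m\mapsto -m$ (equivalently $N\mapsto -N$) the first triple becomes $\bigl((-N+k-1)^4,(-N+k)^4,(-N+k+1)^4\bigr)=\bigl((N-k+1)^4,(N-k)^4,(N-k-1)^4\bigr)$, which is exactly the triple of $R_{N-k}^4$ with the \emph{order of $d_1$ and $d_3$ reversed}. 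So the map ``negate $m$ and swap the roles of the first and third generator'' carries the defining data of one semigroup to that of the other.

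Next I would trace how this symmetry acts on the matrix ${\cal A}_3$. The defining relations (\ref{x9})--(\ref{x10}) are invariant under simultaneously permuting the indices $\{1,2,3\}$ of the generators and applying the same permutation to rows and columns of ${\cal A}_3$: if $\sigma$ is a permutation and $P_\sigma$ the corresponding permutation matrix, then the matrix of minimal relations of $\langle d_{\sigma(1)},d_{\sigma(2)},d_{\sigma(3)}\rangle$ is $P_\sigma^{-1}{\cal A}_3 P_\sigma$, because the minimality conditions in (\ref{x9}) and the constraints (\ref{x10}) only refer to the generators symmetrically up to relabelling. Taking $\sigma=(1\,3)$ (the transposition swapping $1$ and $3$), conjugation by $P_{(1\,3)}$ sends the entry $(i,j)$ to $(\bar\imath,\bar\jmath)$ where $\bar 1=3$, $\bar 2=2$, $\bar 3=1$; concretely $E_{11}\leftrightarrow E_{33}$, $E_{13}\leftrightarrow E_{31}$, $E_{12}\leftrightarrow E_{32}$, $E_{21}\leftrightarrow E_{23}$, and $E_{22}$ fixed. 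This is precisely the pattern of index swaps asserted in the theorem.

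Then I would combine the two observations. The matrix of minimal relations of $R_{N-k}^4$, as a polynomial matrix in $m$, is on one hand $E_{ij}^-(m)=A_{ij}^-m^2-B_{ij}^-m+C_{ij}^-$ with the stated sign convention, and on the other hand, by the two symmetries above, equals $P_{(1\,3)}^{-1}{\cal A}_3\bigl(R_{N+k}^4\bigr)\big|_{m\mapsto -m}\,P_{(1\,3)}$, i.e. its $(i,j)$ entry is $E_{\bar\imath\,\bar\jmath}^+(-m)=A_{\bar\imath\,\bar\jmath}^+m^2-B_{\bar\imath\,\bar\jmath}^+m+C_{\bar\imath\,\bar\jmath}^+$. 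By uniqueness of ${\cal A}_3$ for a nonsymmetric semigroup (the entries $a_{jj}$ are determined by the minimality in (\ref{x9}), and the off-diagonal entries are then forced by (\ref{x10})), these two polynomial matrices coincide identically in $m$. Matching coefficients of $m^2$, $m^1$, $m^0$ gives $A_{ij}^-=A_{\bar\imath\,\bar\jmath}^+$, $B_{ij}^-=B_{\bar\imath\,\bar\jmath}^+$, $C_{ij}^-=C_{\bar\imath\,\bar\jmath}^+$ for every $(i,j)$, which is the complete list of $27$ duality relations in the statement; the identity $G\bigl(R_{N+k}^4\bigr)=G\bigl(R_{N-k}^4\bigr)$ then follows from the genus formula in (\ref{x11}) since $G$ is a symmetric function of the $D_i$ and generators.

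The main obstacle is the justification that the numerically tabulated quadratic polynomials $E_{ij}^\pm(m)$ genuinely \emph{are} the minimal-relation matrices for all admissible $m$ (not just the finitely many checked values), and that the relevant semigroups are all nonsymmetric so that ${\cal A}_3$ is unique; once that is granted, the symmetry argument is essentially formal. A secondary subtlety is bookkeeping the edge cases $k=T_4/2$ and the small excluded values of $n$ listed in sections~\ref{l41}--\ref{l43}, where the polynomial representation breaks down — these must simply be excised from the range of validity of the duality, exactly as the hypothesis $k\le T_4/2$ and the exceptional list already anticipate.
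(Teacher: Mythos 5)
Your proposal is correct and follows essentially the same route as the paper: the substitution $m\mapsto -m$ turns the generator triple of $R_{T_4m+k}^4$ into that of $R_{T_4m-k}^4$ with the first and third generators swapped, and matching the resulting minimal-relation matrices entrywise (with the $1\leftrightarrow 3$ index transposition) and then comparing polynomial coefficients yields the stated duality. The only difference is that you make explicit the uniqueness of ${\cal A}_3$ and the permutation equivariance of the minimal relations, where the paper simply asserts that the two representations ``coincide for arbitrary $m$''; this is a clarification of the same argument, not a different one, and you share with the paper the same unaddressed premise that the tabulated quadratics are indeed the minimal relations for all admissible $m$.
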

%%%%%%%%%%%%%%%%%%%%%%%%%%%%%%%%%%%%%%%%%
\begin{proof}
According to formulas in sections \ref{l41}, \ref{l42} consider the polynomial 
Rep of minimal relations for numerical semigroups $R_{T_4m-k}^4$,
\bea
\left.\begin{array}{lclll}
E_{11}^-(m)(T_4m-k-1)^4&\;=\;&E_{12}^-(m)(T_4m-k)^4&\;+\;&E_{13}^-(m)
(T_4m-k+1)^4,\\
E_{22}^-(m)(T_4m-k)^4&\;=\;&E_{21}^-(m)(T_4m-k-1)^4&\;+\;&E_{23}^-(m)
(T_4m-k+1)^4,\\
E_{33}^-(m)(T_4m-k+1)^4&\;=\;&E_{31}^-(m)(T_4m-k-1)^4&\;+\;&E_{32}^-(m)
(T_4m-k)^4,\end{array}\right.\qquad\label{j3}
\eea
and $R_{40m+k}^4$,
\bea
\left.\begin{array}{lclll}
E_{11}^+(m)(T_4m+k-1)^4&\;=\;&E_{12}^+(m)(T_4m+k)^4&\;+\;&E_{13}^+(m)
(T_4m+k+1)^4,\\
E_{22}^+(m)(T_4m+k)^4&\;=\;&E_{21}^+(m)(T_4m+k-1)^4&\;+\;&E_{23}^+(m)
(T_4m+k+1)^4,\\
E_{33}^+(m)(T_4m+k+1)^4&\;=\;&E_{31}^+(m)(T_4m+k-1)^4&\;+\;&E_{32}^+(m)
(T_4m+k)^4,\end{array}\right.\qquad\label{j4}
\eea
where $E_{ij}^-(m)$ and $E_{ij}^+(m)$ are defined in (\ref{j2}). Replacing $m
\to -m$ in (\ref{j3}) we get
\bea
\left.\begin{array}{lclll}
E_{11}^-(-m)(T_4m+k+1)^4&\;=\;&E_{12}^-(-m)(T_4m+k)^4&\;+\;&E_{13}^-(-m)
(T_4m+k-1)^4,\\
E_{22}^-(-m)(T_4m+k)^4&\;=\;&E_{21}^-(-m)(T_4m+k+1)^4&\;+\;&E_{23}^-(-m)
(T_4m+k-1)^4,\\
E_{33}^-(-m)(T_4m+k-1)^4&\;=\;&E_{31}^-(-m)(T_4m+k+1)^4&\;+\;&E_{32}^-(-m)
(T_4m+k)^4.\end{array}\right.\nonumber
\eea
Compare the last three Eqs. with (\ref{j4}). Both Reps coincide for arbitrary 
$m$ iff
\bea
&&E_{11}^+(m)=E_{33}^-(-m),\qquad E_{12}^+(m)=E_{32}^-(-m),\qquad E_{13}^+(m)=
E_{31}^-(-m),\nonumber\\
&&E_{22}^+(m)=E_{22}^-(-m),\qquad E_{21}^+(m)=E_{23}^-(-m),\qquad E_{23}^+(m)=
E_{21}^-(-m),\nonumber\\
&&E_{33}^+(m)=E_{11}^-(-m),\qquad E_{31}^+(m)=E_{13}^-(-m),\qquad E_{32}^+(m)=
E_{12}^-(-m).\quad\label{j5}
\eea
Substituting (\ref{j2}) into (\ref{j5}) we arrive at the proof of Theorem.
\end{proof}
%%%%%%%%%%%%%%%%%%%%%%%%%%%%%%%%%%%%%%%%%%%%%%%%%%%%%%%
The next Theorem is motivated by systematic calculation on genera of semigroups
$R_{T_4m+k}^4$ and $R_{T_4m-k}^4$. We illustrate it by the following example,
\bea
G\left(R_{20m+9}^4\right)&=&8(7766000 m^6+21049600 m^5 +23809780 m^4 
+14385560 m^3 +\nonumber\\
&&\hspace{4.6cm}4895936 m^2+889781 m +67446),\nonumber\\
G\left(R_{20m-9}^4\right)&=&8(7766000 m^6-21049600 m^5 +23809780 m^4 
-14385560 m^3 +\nonumber\\
&&\hspace{4.6cm}4895936 m^2-889781 m +67446).\nonumber
\eea
%%%%%%%%%%%%%%%%%%%%%%%%%%%%%%%%%%%%%%%%%%%%%%%%%%%%%%%
\begin{theorem}\label{the2}
Let two numerical semigroups $R_{T_4m\mp k}^4$ be given by their minimal 
relations (\ref{j1}) and let their genera $G^{\pm}(m)$ are given by formulas,
\bea
R_{T_4m-k}^4:\;\;G^-(m)=\sum_{r=0}^6g_r^-m^r\qquad R_{T_4m+k}^4:\;\;G^+(m)=
\sum_{r=0}^6g_r^+m^r.\label{j6}
\eea
Then $g_{2r}^-=g_{2r}^+$ and $g_{2r+1}^-=-g_{2r+1}^+$.
\end{theorem}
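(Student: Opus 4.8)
The plan is to reduce everything to the single polynomial identity $G^+(m)=G^-(-m)$ in $\mathbb{Q}[m]$, which immediately yields $g_{2r}^+=g_{2r}^-$ and $g_{2r+1}^+=-g_{2r+1}^-$ by comparing coefficients. The whole argument is a corollary of the genus formula in (\ref{x11}) together with the duality relations already established in Theorem \ref{the1}.

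First I would write, for each of the two families, $2G^{\pm}(m)=1+D_0^{\pm}(m)+D_1^{\pm}(m)+D_2^{\pm}(m)-D_3^{\pm}(m)$ as in (\ref{x11}), where by (\ref{j1})--(\ref{j2}) one has $D_0^{\pm}=E_{11}^{\pm}E_{22}^{\pm}E_{33}^{\pm}$, $D_1^{\pm}=E_{12}^{\pm}E_{23}^{\pm}E_{31}^{\pm}$, $D_2^{\pm}=E_{13}^{\pm}E_{32}^{\pm}E_{21}^{\pm}$, and $D_3^{\pm}(m)=(T_4m\pm k-1)^4+(T_4m\pm k)^4+(T_4m\pm k+1)^4$. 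Since the matrix Reps of Sections \ref{l41}, \ref{l42} are valid for all sufficiently large $m$, these are genuine polynomial identities in $m$, so it suffices to match polynomials.

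Next I would substitute the duality relations (\ref{j5}) of Theorem \ref{the1}, which can be written compactly as $E_{ij}^+(m)=E_{\sigma(i)\sigma(j)}^-(-m)$ with $\sigma$ the transposition $(1\,3)$, into the three triple products. The only delicate point is index bookkeeping. Since $\sigma$ fixes the diagonal set $\{(1,1),(2,2),(3,3)\}$, one gets $D_0^+(m)=E_{33}^-(-m)E_{22}^-(-m)E_{11}^-(-m)=D_0^-(-m)$. On the other hand $\sigma$ carries the index set $\{(1,2),(2,3),(3,1)\}$ of $D_1$ onto the index set $\{(1,3),(3,2),(2,1)\}$ of $D_2$ and vice versa, so by commutativity of the product $D_1^+(m)=D_2^-(-m)$ and $D_2^+(m)=D_1^-(-m)$ — the two off-diagonal products are \emph{interchanged} rather than individually invariant. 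Finally $(-(T_4m+k+\varepsilon))^4=(T_4m+k+\varepsilon)^4$ for $\varepsilon\in\{-1,0,1\}$ gives $D_3^-(-m)=D_3^+(m)$; this is the one place the evenness of the exponent $4$ is used (for an odd exponent this step, hence the theorem, would fail). Summing, $(D_0^++D_1^++D_2^+-D_3^+)(m)=(D_0^-+D_1^-+D_2^--D_3^-)(-m)$, so $2G^+(m)=2G^-(-m)$ and hence $G^+(m)=G^-(-m)$ in $\mathbb{Q}[m]$.

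Finally, writing $G^+(m)=\sum_{r=0}^{6}g_r^+m^r$ and $G^-(-m)=\sum_{r=0}^{6}(-1)^r g_r^-m^r$ and equating coefficients gives $g_r^+=(-1)^r g_r^-$, i.e. $g_{2r}^+=g_{2r}^-$ and $g_{2r+1}^+=-g_{2r+1}^-$, as claimed. I do not expect any serious obstacle; the only steps to carry out with care are the permutation bookkeeping that shows $D_1$ and $D_2$ swap under the duality, and the parity remark that makes $D_3$ invariant — everything else is immediate from Theorem \ref{the1} and the identity theorem for polynomials.
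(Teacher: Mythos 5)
Your proposal is correct and follows essentially the same route as the paper: both reduce the claim to the polynomial identity $G^{+}(m)=G^{-}(-m)$ by substituting the relations (\ref{j5}) from Theorem \ref{the1} into the genus formula of (\ref{x11}), observing that $D_0$ and $D_3$ are invariant while $D_1$ and $D_2$ are interchanged under $m\to-m$. Your explicit bookkeeping via the transposition $\sigma=(1\,3)$ and the parity remark on the exponent $4$ are just slightly more detailed renderings of the paper's own computation.
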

%%%%%%%%%%%%%%%%%%%%%%%%%%%%%%%%%%%%%%%%%
\begin{proof}
Write formulas (\ref{x10}) of $D_k$, $0\leq k\leq 3$, for two numerical 
semigroups: $R_{T_4m- k}^4$
\bea
&&D_0^-(m)=E_{11}^-(m)\;E_{22}^-(m)\;E_{33}^-(m),\qquad
D_1^-(m)=E_{12}^-(m)\;E_{23}^-(m)\;E_{31}^-(m),\nonumber\\
&&D_2^-(m)=E_{13}^-(m)\;E_{32}^-(m)\;E_{21}^-(m),\nonumber\\
&&D_3^-(m)=(T_4m-k-1)^4+(T_4m-k)^4+(T_4m-k+1)^4,\nonumber
\eea
and $R_{40m+k}^4$,
\bea
&&D_0^+(m)=E_{11}^+(m)\;E_{22}^+(m)\;E_{33}^+(m),\qquad
D_1^+(m)=E_{12}^+(m)\;E_{23}^+(m)\;E_{31}^+(m),\nonumber\\
&&D_2^+(m)=E_{13}^+(m)\;E_{32}^+(m)\;E_{21}^+(m),\nonumber\\
&&D_3^+(m)=(T_4m+k-1)^4+(T_4m+k)^4+(T_4m+k+1)^4.\nonumber
\eea
Combining the above formulas with (\ref{j5}) we obtain
\bea
D_0^-(m)\!=\!D_0^+(-m),\;\;D_1^-(m)\!=\!D_2^+(-m),\;\;
D_2^-(m)\!=\!D_1^+(-m),\;\;D_3^-(m)\!=\!D_3^+(-m).\nonumber
\eea
that together with genus definition in (\ref{x9}) gives $G^-(m)=G^+(-m)$. 
Combining the last equality with (\ref{j6}) we arrive at the proof of Theorem.
\end{proof}
%%%%%%%%%%%%%%%%%%%%%%%%%%%%%%%%%%%%%%%%%%%%%%%%%%%%%%%
\section{Concluding remarks}\label{l5}
%%%%%%%%%%%%%%%%%%%%%%%%%%%%%%%%%%%%%%%%%%%%%%%%%%%%%%%
In the present section we state a conjecture and put a question devoted to 
numerical semigroups $R_n^k$, $n>3,k\geq 5$, where an appearence of symmetric 
semigroups $R_n^k$ seems very rare. Numerical calculations give only two 
semigroups $R_5^{11}$ and $R_5^{13}$ among others $R_{2p+1}^k$, $2\leq p\leq 
50$, $5\leq k\leq 10^3$.
%%%%%%%%%%%%%%%%%%%%%%%%%%%%%%%%%%%%%%%%%%%%%%%%%%%%%%%
\begin{conjecture}\label{con1}
Let a numerical semigroup $R_n^k$, $n=T_km+j$, be given by their minimal 
relations on residue class of $n$ modulo $T_k$,
\bea
R_{T_km+j}^k:\;\left(\begin{array}{rrr}E_{11}^{(k)}(m)\;&\;-E_{12}^{(k)}(m)\;&\;
-E_{13}^{(k)}(m)\\-E_{21}^{(k)}(m)\;&\;E_{22}^{(k)}(m)\;&\;-E_{23}^{(k)}(m)\\
-E_{31}^{(k)}(m)\;&\;-E_{32}^{(k)}(m)\;&\;E_{33}^{(k)}(m)\end{array}\right),
\quad j\leq \frac{T_k}{2}.\label{i1}
\eea

If $k=2q$, then polynomial $E_{ij}^{(k)}(m)$ reads,
\bea
&&E_{ij}^{(2q)}(m)=A_{ij}m^q+B_{ij}m^{q-1}+\ldots +C_{ij}m+D_{ij},\;\;1\leq i,j
\leq 3,\label{i2}
\eea
and the Frobenius number and genus have the asymptotics: $F(n),G(n)={\cal O}
\left(n^{3q}\right)$.

If $k=2q+1$, then the matrix elements with $(i,j)=(1,1),(1,2),(2,1),(2,2)$ 
are given by
\bea
E_{ij}^{(2q+1)}(m)=K_{ij}m^{q+1}+I_{ij}m^q+\ldots +J_{ij}m+H_{ij},\label{i3}
\eea
while the matrix elements with $(i,j)=(1,3),(2,3),(3,1),(3,2),(3,3)$ read
\bea
E_{ij}^{(2q+1)}(m)=M_{ij}m^q+N_{ij}m^{q-1}+\ldots +P_{ij}m+S_{ij},\label{i4}
\eea
and the Frobenius number and genus have the asymptotics: $F(n),G(n)={\cal O}
\left(n^{3q+2}\right)$.
\end{conjecture}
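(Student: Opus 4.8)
The plan is to prove Conjecture \ref{con1} by the same structural argument that underlies Theorems \ref{the1} and \ref{the2}, but carried out for general $k$, together with a degree-counting bookkeeping argument. The starting point is to understand why the matrix of minimal relations $\mathcal{A}_3$ for $R_n^k$ should have polynomial entries on residue classes of $n$ modulo some period $T_k$. This follows the pattern already seen in Sections \ref{l2}--\ref{l42}: writing out the third relation $a_{33}(n+1)^k=a_{32}n^k+a_{31}(n-1)^k$ and expanding $(n\pm 1)^k$ by the binomial theorem gives $a_{32}n^k=(a_{33}-a_{31})\sum_{i\ \mathrm{odd}}\binom{k}{i}n^{k-i}\cdot 2 + (a_{33}+a_{31})\sum_{i\ \mathrm{even},\,i\ge 2}\binom{k}{i}n^{k-i}\cdot 2 + \dots$. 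First I would choose a parametrization $a_{31}=P(n)+Q(n)$, $a_{33}=P(n)-Q(n)$ with $P,Q$ low-degree polynomials in $n$ (as done for $k=2,3$: $a_{31}=a_{33}$, resp.\ $a_{31}=q(3n+1)$, $a_{33}=q(3n-1)$) and impose that the $n^{-j}$ tails vanish, which forces relations among the coefficients and pins down the shape of $a_{3j}$ as a polynomial in $n$ of controlled degree, with a denominator dividing some integer; clearing that denominator across residue classes of $n$ fixes $T_k$. The remaining entries $a_{21},a_{22},a_{23}$ are then constrained by the two surviving equations $a_{22}(n-1)^k=\dots$, $a_{22}(n+1)^k=\dots$; balancing the top degree $n^k$ on both sides forces $a_{21},a_{22}$ to be polynomials in $m$ (where $n=T_km+j$) of degree $\lceil k/2\rceil$ and $a_{23}$ of degree $\lfloor k/2\rfloor$, which is exactly the dichotomy $k=2q$ versus $k=2q+1$ in \eqref{i2}--\eqref{i4}.

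Second, with the polynomial shapes of all $a_{ij}$ in hand, I would read off the asymptotics of $F$ and $G$ directly from \eqref{x11}. We have $F(S_3)=\max\{b_{11},b_{22}\}-D_3$ and $2G(S_3)=1+D_0+D_1+D_2-D_3$, where $D_0=a_{11}a_{22}a_{33}$, $D_1=a_{12}a_{23}a_{31}$, $D_2=a_{13}a_{32}a_{21}$, $D_3=d_1+d_2+d_3$, and $b_{11},b_{22}$ are sums $D_0+D_i$. Since each $d_i=(n\pm\epsilon)^k$ is degree $k$ in $n$, $D_3={\cal O}(n^k)$. The term $D_0$ is a product of three matrix entries: in the even case $k=2q$ each entry is ${\cal O}(m^q)={\cal O}(n^q)$, so $D_0={\cal O}(n^{3q})$; likewise $D_1,D_2$ are products of entries whose degrees in $m$ sum to $3q$ (one factor of degree $q$ from an off-diagonal $\lceil k/2\rceil$-entry pattern — here all $\lceil k/2\rceil=q$), giving ${\cal O}(n^{3q})$. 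Hence $G,F={\cal O}(n^{3q})$. In the odd case $k=2q+1$, the entries in positions $(1,1),(1,2),(2,1),(2,2)$ have degree $q+1$ in $m$ while those in positions $(1,3),(2,3),(3,1),(3,2),(3,3)$ have degree $q$; one checks that every one of $D_0,D_1,D_2$ is a product of exactly two "large" ($q+1$) entries and one "small" ($q$) entry — e.g.\ $D_0=a_{11}a_{22}a_{33}$ has large$\times$large$\times$small, degree $2(q+1)+q=3q+2$ — so $F,G={\cal O}(n^{3q+2})$. This degree bookkeeping is the crux of the asymptotic claims and needs the precise assignment of which positions are "large"; verifying that assignment is consistent with the three defining identities \eqref{x10} for $d_1,d_2,d_3$ being degree-$k$ polynomials is the key consistency check.

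Third, for the self-consistency of the polynomial ansatz one invokes the argument of Theorem \ref{the1}: the defining relations \eqref{x9} written out are polynomial identities in $n$ that must hold for all $n$ in a residue class, hence as identities in the polynomial ring $\mathbb{Q}[m]$; a solution of bounded degree, once it agrees with the numerically computed matrices at finitely many values of $n$ (enough to interpolate), is the solution. This is exactly how the explicit tables in Sections \ref{l31}--\ref{l42} were obtained and validated, and the same interpolation-plus-uniqueness reasoning legitimizes the general polynomial forms \eqref{i2}--\eqref{i4}. The period $T_k$ is the least common multiple of the denominators appearing when one solves the triangular system of coefficient equations for the $a_{ij}$; one expects $T_k$ to grow (e.g.\ $T_2=4$, $T_3=18$, $T_4=40$), and part of the conjecture is precisely that such a finite $T_k$ always exists.

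The hard part will be establishing rigorously that the minimal relations \emph{are} eventually polynomial in $m$ on residue classes — that is, that the minima in \eqref{x9} are attained by the candidate polynomial solutions for all sufficiently large $n$, rather than by some sporadic smaller relation. The parametrization produces \emph{a} valid relation matrix of the right block structure, but minimality (that $a_{11}$ really is the smallest $v_{11}\ge 2$ with $v_{11}d_1\in\langle d_2,d_3\rangle$, etc.) requires ruling out competitors; this is exactly the source of the finitely many "exceptional" semigroups listed in Sections \ref{l33}, \ref{l43}, and showing there are only finitely many exceptions for each $k$ is the genuine obstacle. One would attack it by bounding $a_{11}$ from below via the size of $d_1=(n-1)^k$ relative to $\gcd$-type constraints among $d_1,d_2,d_3$, showing the candidate value is forced once $n$ exceeds an explicit threshold depending on $k$; the asymptotic statements on $F$ and $G$ then follow unconditionally for $n$ past that threshold, which is all the conjecture asserts.
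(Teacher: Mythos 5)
You should first note that the statement you are proving is labelled a \emph{Conjecture} in the paper: the author offers no proof of it at all, only the supporting evidence of the explicit computations for $k=2,3,4$ in Sections \ref{l2}--\ref{l4}, and he explicitly leaves the determination of $T_k$ for $k\geq 5$ as an open Question (\ref{que1}). Measured against that, your proposal is an honest strategy outline rather than a proof, and it does not close the conjecture. The central gap is one you yourself concede in your last paragraph: everything rests on the assertion that for each $k$ there exists a finite period $T_k$ such that the entries of the matrix of minimal relations (\ref{x9}) are, for all sufficiently large $n$ in each residue class modulo $T_k$, given by fixed polynomials in $m$. Your ``interpolation-plus-uniqueness'' argument in the third step presupposes exactly this: interpolation can only validate a polynomial ansatz once you already know the entries are polynomial of bounded degree and you already know $T_k$ --- but the existence of $T_k$ is part of what is being conjectured, and no mechanism is offered for producing it beyond the ansatz-fitting that worked for $k\leq 4$. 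Likewise, minimality of the constructed relations (that the candidate $a_{jj}$ really attain the minima in (\ref{x9})) is named as ``the genuine obstacle'' but no bound or argument is supplied. So the proposal reproduces the paper's heuristic derivation and leaves open precisely what the paper leaves open.

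There is also a concrete error in your degree bookkeeping for the odd case $k=2q+1$. With the ``large'' positions $(1,1),(1,2),(2,1),(2,2)$ of degree $q+1$ and the ``small'' positions of degree $q$ as in (\ref{i3})--(\ref{i4}), the products in (\ref{x11}) are $D_0=a_{11}a_{22}a_{33}$ (large $\times$ large $\times$ small, degree $3q+2$), but $D_1=a_{12}a_{23}a_{31}$ and $D_2=a_{13}a_{32}a_{21}$ each contain exactly \emph{one} large factor and two small ones, hence have degree $3q+1$, not $3q+2$ as you claim (``every one of $D_0,D_1,D_2$ is a product of exactly two large and one small entry''). The asymptotics $F,G={\cal O}(n^{3q+2})$ survive only because $D_0$ dominates in $b_{11}=D_0+D_1$ and $b_{22}=D_0+D_2$. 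Relatedly, the consistency check you flag but do not perform is genuinely nontrivial: in the odd case $d_3=a_{11}a_{22}-a_{12}a_{21}$ from (\ref{x10}) is a difference of two degree-$(2q+2)$ products that must collapse to degree $k=2q+1$, forcing the leading coefficients to satisfy $K_{11}K_{22}=K_{12}K_{21}$; without verifying such cancellations the claimed degree pattern (\ref{i3})--(\ref{i4}) is not even internally consistent with (\ref{x10}). None of this invalidates the conjecture, but it means your proposal neither proves it nor faithfully reconstructs an argument the paper contains, since the paper contains none.
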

%%%%%%%%%%%%%%%%%%%%%%%%%%%%%%%%%%%%%%%%%%%%%%%%%%%%%%%
\begin{question}\label{que1}
Keeping in mind $T_2=4,\;T_3=18,\;T_4=40$, find $T_k$ for $k\geq 5$.
\end{question}
%%%%%%%%%%%%%%%%%%%%%%%%%%%%%%%%%%%%%%%%%
\section*{Acknowledgement}
%%%%%%%%%%%%%%%%%%%%%%%%%%%%%%%%%%%%%%%%%%%%%%%%%%
The research was supported by the Kamea Fellowship.
%%%%%%%%%%%%%%%%%%%%%%%%%%%%%%%%%%%%%%%%%%%%%%%%%%%%
\appendix   
\section{Proof of Propositions}\label{app1}
%%%%%%%%%%%%%%%%%%%%%%%%%%%%%%%%%%%%%%%%%%%%%%%%%%%%
{\bf Proof of Proposition \ref{pro2}}. 
Semigroup $R_3^3$ is symmetric due to requirement (\ref{x4}a). Find more $n$ 
which satisfy (\ref{x4}a),
\bea
(n+1)^3=e_1(n-1)^3+e_2n^3,\quad e_1,e_2\in{\mathbb N},\quad n>3.\nonumber
\eea
Simplifying the last equality we obtain the Diophantine Eq.
\bea
&&(e_1+e_2-1)t^3+3(2e_1+3e_2-4)t^2+3(4e_1+9e_2-16)t+\label{z1}\\
&&\hspace{4cm}8e_1+27e_2-64=0,\qquad t=n-3.\nonumber
\eea
Decompose the whole integer lattice ${\mathbb Z}_2^+:=\{e_1,e_2\;|\;e_1,e_2\geq 
1\}$ in different sets,   
\bea
&&{\mathbb Z}_2^+=\bigcup_{j=1}^5{\mathbb E}_j,\quad{\mathbb E}_1=\{e_1,e_2\;|
\;e_1\geq 5;\;e_2=1\},\quad{\mathbb E}_2=\{e_1,e_2\;|\;e_1\geq 2;\;e_2=2\},
\nonumber\\
&&{\mathbb E}_3=\{e_1,e_2\;|\;e_1\geq 1;\;e_2\geq 3\},\quad{\mathbb E}_4=\{e_1,
e_2\;|\;1\leq e_1\leq 4;\;e_2=1\},\nonumber\\
&&{\mathbb E}_5=\{e_1=e_2=1\}.\nonumber
\eea
If $(e_1,e_2)\in{\mathbb E}_j$, $1\leq j\leq 3$, then the sequence of
coefficients in Eq. (\ref{x8}) has no changes of signs and therefore, by  
Descartes' rule of signs, Eq. (\ref{x8}) has no positive solutions in $t$. If 
$(e_1,e_2)\in{\mathbb E}_j$, $j=4,5$, then a straightforward numerical
verification shows that neither of 5 qubic Eqs. (\ref{z1}) has integer 
positive 
solution in $t$.

Consider an alternative way to symmetrize $R_n^3$ by providing condition
(\ref{x4}b), which may occur only when $n=2q+1$ and results in the Diophantine
Eq. in $c_1,c_2\in{\mathbb N}$, $q>1$, $(2q+1)^3=c_1q^3+c_2(q+1)^3$, i.e., 
\bea
(c_1+c_2-8)q^3+3(c_2-4)q^2+3(c_2-2)q+c_2-1=0.\label{z2}
\eea
Substituting $q=p+1$, $p>0$, into (\ref{z2}) we obtain the cubic Diophantine
Eq. in $p$,
\bea
(c_1+c_2-8)p^3+3(c_1+2c_2-12)p^2+3(c_1+4c_2-18)p+c_1+8c_2-27=0,\qquad\label{z3}
\eea
which has no positive integer solutions $p$. Indeed, to prove this statement,
we make use of Descartes' rule of signs for integer coefficients in Eq. 
(\ref{z3}). For this purpose decompose the whole integer lattice ${\mathbb 
Z}
_2^+:=\{c_1,c_2\;|\;c_1,c_2\geq 1\}$ as follows,
\bea
{\mathbb Z}_2^+={\mathbb C}\cup{\overline{\mathbb C}},\qquad{\mathbb C}=
\bigcup_{j=1}^7{\mathbb C}_j,\qquad {\overline{\mathbb C}}=\bigcup_{j=1}^6
{\overline{\mathbb C}_j},\quad\mbox{where}\nonumber
\eea
\bea
&&{\mathbb C}_1=\{c_1,c_2\;|\;1\leq c_1\leq 7,\;c_1\geq 19;\;c_2=1\},\nonumber\\
&&{\mathbb C}_2=\{c_1,c_2\;|\;1\leq c_1\leq 6,\;c_1\geq 11;\;c_2=2\},\nonumber\\
&&{\mathbb C}_3=\{c_1,c_2\;|\;1\leq c_1\leq 3,\;c_1\geq 6;\;c_2=3\},\qquad
{\mathbb C}_4=\{c_1,c_2\;|\;c_1\geq 4;\;c_2=4\},\nonumber\\
&&{\mathbb C}_5\!=\!\{c_1,c_2\;|\;c_1\geq 3;\;c_2=5\},\qquad{\mathbb C}_6\!=\!  
\{c_1,c_2\;|\;c_1\geq 2;\;c_2=6\},\nonumber\\
&&{\mathbb C}_7\!=\!\{c_1,c_2\;|\;c_1\geq 1;\;c_2\geq 7\},\nonumber\\
&&{\overline{\mathbb C}_1}=\{c_1,c_2\;|\;8\leq c_1\leq 18;\;c_2=1\},\qquad   
{\overline{\mathbb C}_2}=\{c_1,c_2\;|\;7\leq c_1\leq 10;\;c_2=2\},\nonumber\\
&&{\overline{\mathbb C}_3}=\{c_1,c_2\;|\;4\leq c_1\leq 5;\;c_2=3\},\qquad
{\overline{\mathbb C}_4}=\{c_1,c_2\;|\;1\leq c_1\leq 3;\;c_2=4\},\nonumber\\
&&{\overline{\mathbb C}_5}=\{c_1,c_2\;|\;1\leq c_1\leq 2;\;c_2=5\},\qquad
{\overline{\mathbb C}_6}=\{c_1=1;\;c_2=6\}.\nonumber
\eea
If $(c_1,c_2)\in{\mathbb C}$ then the sequence of coefficients in Eq. 
(\ref{z3}) has no changes of signs and therefore, by Descartes' rule of signs, 
Eq. (\ref{z3}) has no positive solutions in $p$. Regarding the rest of the
cases, when $(c_1,c_2)\in{\overline{\mathbb C}}$, a straightforward numerical
verification shows that neither of 23 cubic Eqs. (\ref{z3}) has integer positive
solution in $p$.$\;\;\;\;\;\;\Box$
\vspace{.5cm}

{\bf Proof of Proposition \ref{pro3}}. Semigroup $R_3^4$ is symmetric 
due to requirement (\ref{x4}a). Find more $n$ which satisfy (\ref{x4}a),
\bea
(n+1)^4=f_1(n-1)^4+f_2n^4,\quad f_1,f_2\in{\mathbb N},\quad n>4.\nonumber
\eea
Simplify the last equality and obtain the Diophantine Eq.
\bea
&&(f_1+f_2-1)u^4+4(2f_1+3f_2-4)u^3+6(4f_1+9f_2-16)u^2+\label{z4}\\
&&\hspace{1cm} 4(8f_1+27f_2-64)u+16f_1+81f_2-256=0,\qquad u=n-3.\nonumber
\eea
Decompose the whole integer lattice ${\mathbb Z}_2^+:=\{f_1,f_2\;|\;f_1,f_2
\geq 1\}$ in different sets, ${\mathbb Z}_2^+=\bigcup_{j=1}^5{\mathbb F}_j$,
 where
\bea
&&{\mathbb F}_1=\{f_1,f_2\;|\;f_1\geq 11;\;f_2=1\},\qquad
{\mathbb F}_2=\{f_1,f_2\;|\;f_1\geq 6;\;f_2=2\},\nonumber\\
&&{\mathbb F}_3\!=\!\{f_1,f_2\;|\;f_1\geq 1;\;f_2\geq 3\},\qquad
{\mathbb F}_4=\{f_1,f_2\;|\;1\leq f_1\leq 10;\;f_2=1\},\nonumber\\
&&{\mathbb F}_5=\{f_1,f_2\;|\;1\leq f_1\leq 5;\;f_2=2\}.\nonumber
\eea
If $(f_1,f_2)\in{\mathbb F}_j$, $1\leq j\leq 3$, then the sequence of
coefficients in Eq. (\ref{z4}) has no changes of signs and therefore, by
Descartes' rule of signs, Eq. (\ref{z4}) has no positive solutions in $u$. In 
the rest of the cases, when $(f_1,f_2)\in{\mathbb F}_j$, $j=4,5$, a
straightforward numerical verification shows that neither of 15 quartic Eqs. 
(\ref{z4}) has integer positive solution in $u$.

Consider another way to find symmetric semigroups $R_n^4$ by providing 
condition (\ref{x4}b), which may occur only when $n=2q+1$ and results in 
the Diophantine Eq. in $h_1,h_2\in{\mathbb N}$, $q>1$,
\bea
(2q+1)^4=h_1q^4+h_2(q+1)^4.\label{z5}
\eea
Equation (\ref{z5}) has solutions $q=2,h_1=34,h_2=1$ and $q=3,h_1=17,h_2=4$, 
which correspond to symmetric semigroups $R_5^4$ and $R_7^4$, respectively. We 
show that Eq. (\ref{z5}) has no more positive integer solutions. Denote $v=q-2$,
$v>0$, and represent (\ref{z5}) as follows,
\bea
(h_1+h_2-16)v^4+4(2h_1+3h_2-40)v^3+6(4h_1+9h_2-100)v^2+\nonumber\\
4(8h_1+27h_2-250)v+16h_1+81h_2-625=0.\label{z6}
\eea
Decompose the whole integer lattice ${\mathbb Z}_2^+:=\{h_1,h_2\;|\;h_1,h_2
\geq 1\}$ in different sets,
\bea
{\mathbb Z}_2^+={\mathbb H}\cup{\overline{\mathbb H}},\qquad 
{\mathbb H}=\bigcup_{j=1}^{15}{\mathbb H}_j,\qquad {\overline{\mathbb H}}=
\bigcup_{j=0}^{14}{\overline{\mathbb H}_j},\quad\mbox{where}\nonumber
\eea
\bea
&&{\mathbb H}_1\!=\!\{h_1,h_2\;|\;1\leq h_1\leq 15,\;h_1\geq 34;\;h_2\!=\!1\},
\nonumber\\
&&{\mathbb H}_2\!=\!\{h_1,h_2\;|\;1\leq h_1\leq 14,\;h_1\geq 29;\;h_2\!=\!2\},
\nonumber\\
&&{\mathbb H}_3\!=\!\{h_1,h_2\;|\;1\leq h_1\leq 13,\;h_1\geq 24;\;h_2\!=\!3\},
\nonumber\\
&&{\mathbb H}_4\!=\!\{h_1,h_2\;|\;1\leq h_1\leq 12,\;h_1\geq 19;\;h_2\!=\!4\},
\nonumber\\
&&{\mathbb H}_5\!=\!\{h_1,h_2\;|\;1\leq h_1\leq 11,\;h_1\geq 15;\;h_2\!=\!5\},
\nonumber\\
&&{\mathbb H}_6\!=\!\{h_1,h_2\;|\;1\leq h_1\leq 8,\;h_1\geq 12;\;h_2\!=\!6\},
\nonumber\\
&&{\mathbb H}_7=\{h_1,h_2\;|\;1\leq h_1\leq 3,\;h_1\geq 10;\;h_2=7\},\nonumber\\
&&{\mathbb H}_j=\{h_1,h_2\;|\;h_1\geq 16-j;\;h_2=j\},\;\;8\leq j\leq 14,
\nonumber\\
&&{\mathbb H}_{15}=\{h_1,h_2\;|\;h_1\geq 1;\;h_2\geq 15\},\nonumber\\
&&{\overline{\mathbb H}_1}=\{h_1,h_2\;|\;16\leq h_1\leq 33;\;h_2=1\},\qquad
{\overline{\mathbb H}_2}=\{h_1,h_2\;|\;15\leq h_1\leq 28;\;h_2=2\},\nonumber\\
&&{\overline{\mathbb H}_3}=\{h_1,h_2\;|\;14\leq h_1\leq 23;\;h_2=3\},\qquad
{\overline{\mathbb H}_4}=\{h_1,h_2\;|\;13\leq h_1\leq 18;\;h_2=4\},\nonumber\\
&&{\overline{\mathbb H}_5}=\{h_1,h_2\;|\;12\leq h_1\leq 14;\;h_2=5\},\qquad
{\overline{\mathbb H}_6}=\{h_1,h_2\;|\;9\leq h_1\leq 11;\;h_2=6\},\nonumber\\
&&{\overline{\mathbb H}_7}=\{h_1,h_2\;|\;4\leq h_1\leq 9;\;h_2=7\},\nonumber\\
&&{\overline{\mathbb H}_j}=\{h_1,h_2\;|\;1\leq h_1\leq 15-j;\;h_2=j\},\quad
8\leq j\leq 14.\nonumber
\eea
If $(h_1,h_2)\in{\mathbb H}$ then the sequence of coefficients in Eq. 
(\ref{z6}) has no changes of signs and therefore, by Descartes' rule of signs,
Eq. (\ref{z6}) has no positive solutions in $p$. Regarding the rest of the 
cases, when $(h_1,h_2)\in{\overline{\mathbb H}}$, a straightforward numerical 
verification shows that neither of 88 quartic Eqs. (\ref{z6}) has integer 
positive solution in $v$.$\;\;\;\;\;\;\Box$
%%%%%%%%%%%%%%%%%%%%%%%%%%%%%%%%%%%%%%%%%%%%%%%%%%%%


\begin{thebibliography}{99}
%%%%%%%%%%%%%%%%%%%%%%%%%%%%%%%%%%%%%%%%%%%%%%%%%%%%%%%%%%%
\bibitem{bra42}  A. Brauer, On a problem of partitions, {\it Am. J. Math.},
                 {\bf 64} (1942) 299-312
\bibitem{fel06}  L. Fel, Frobenius problem for semigroups ${\sf S}\left(
                 d_1,d_2,d_3\right)$, {\it Funct. Analysis and Other Math.}, 
                 {\bf 1} (2006) \# 2, 119-157
\bibitem{f09}    L. Fel, Symmetric semigroups generated by Fibonacci and Lucas 
                 triples, {\it Integers}, {\bf 9} (2009) 106-116
\bibitem{f11}    L. Fel, Duality relation for the Hilbert series of almost 
                 symmetric numerical semigroups, {\it Israel J. Math}, {\bf 185}
                 (2011) 413-444
\bibitem{herz70} J. Herzog, Generators and relations of Abelian semigroups
                 and semigroup rings,\\ {\it Manuscripta Math.}, {\bf 3} (1970) 
                 175-193
\bibitem{john60} S. Johnson, A linear Diophantine problem, {\it Canad. J. Math.}
                 , {\bf 12} (1960) 390-398
\bibitem{kraf85} J. Kraft, Singularity of monomial curves in ${\mathbb A}^3$ 
                 and Gorenstein monomial curves in ${\mathbb A}^4$, {\it Canad. 
                 J. Math.}, {\bf 37} (1985) 872-892
\bibitem{le15}   M. Lepilov, J. O'Rourke, I. Swanson, Frobenius numbers of
                 numerical semigroups generated by three consecutive squares
                 or cubes, {\it Semigroup Forum}, {\bf 91} (2015) 238-259 
\bibitem{mrr07}  J. Marin, J. Ramirez Alfonsin, M. Revuelta, On the Frobenius 
                 number of Fibonacci numerical semigroups, {\it Integers},
                 {\bf 7} (2007) \# A14
\bibitem{op08}   D. Ong, V. Ponomarenko, Frobenius number of geometric
                 sequences, {\it Integers}, {\bf 8} (2008) \# A33
\bibitem{rob56}  J. Roberts, Note on linear forms, {\it Proc. Am. Math. Soc.},
                 {\bf 7} (1956) 465-469
\bibitem{ro78}   J. R\"odseth, A linear Diophantine problem of Frobenius,
                 {\it J. Reine Angew. Math.}, {\bf 301} (1978) 171-178   
\bibitem{rr09}   J. Ramirez Alfonsin and J. R\"odseth, Numerical semigroups:  
                 Ap\'ery sets and Hilbert series, {\it Semigroup Forum}, 
                 {\bf 79} (2009) 323-340
\bibitem{Syl84}  J. Sylvester, Mathematical Questions with their solution, 
                 {\it Educational Times}, {\bf 41} (1884) 21
\bibitem{wata73} K. Watanabe, Examples of 1--dim Gorenstein Domains, {\it 
                 Nagoya Math. J.}, {\bf 49} (1973) 101-109
\end{thebibliography}
\end{document}